\pgfplotsset{compat=1.15}
\tikzset{
    >=stealth,
    every picture/.style={thick},
    graphs/every graph/.style={empty nodes},
}
\tikzstyle{vertex}=[
\newtcolorbox{questionbox}[1][]{%
  question style,
  title=\textbf{Question},  % default title
  #1                        % allow user overrides
}
\tikzstyle{printersafe}=[decoration={snake,amplitude=0pt}]
\newcommand{\Aut}{\operatorname{Aut}}
\newcommand{\Cl}{\operatorname{Cl}}
\newcommand{\Spec}{\operatorname{Spec}}
\definecolor{uuuuuu}{rgb}{0.26666666666666666,0.26666666666666666,0.26666666666666666}
  \newtheorem{introthm}{Theorem}
  \newtheorem{theorem}{Theorem}[section]
  \newtheorem{lemma}[theorem]{Lemma}
  \newtheorem{proposition}[theorem]{Proposition}
  \newtheorem{corollary}[theorem]{Corollary}
  \newtheorem{theorem/definition}[theorem]{Theorem/Definition}
  \newtheorem{definition}[theorem]{Definition}
  \newtheorem{example}[theorem]{Example}
\theoremstyle{remark}
\numberwithin{equation}{section}
\keywords{complexity, Cox ring, toric degenerations, log Calabi--Yau varieties, polyptych lattice. }
\subjclass[2020]{Primary: 14M25, 14E30; Secondary: 14E15}
\begin{document}

\title{Geometry of Tropical Mutation Surfaces with a Single Mutation}

\author[T.~Oda]{Tomoki Oda}
\address{UCLA Mathematics Department, Box 951555, Los Angeles, CA 90095-1555, USA
}
\email{tomokioda0723@math.ucla.edu}
\thanks{The author was partially supported by NSF research grant DMS-2443425.}

\begin{abstract}

Escobar, Harada, and Manon introduced polyptych lattices as a
piecewise-linear extension of the lattice-polytope formalism of toric
geometry. In this paper we study the first genuinely non-toric case:
rank-two polyptych lattices with a single shear. A detropicalization is
given by a polynomial \(f(y)\), and the corresponding affine surface is
 $U_f=\operatorname{Spec} K[x_1,x_2,y^{\pm 1}]/\langle x_1x_2-f(y)\rangle.$
We classify these detropicalizations, compute the complexity of their
projective compactifications, and show that the resulting log
Calabi--Yau surface pairs are of cluster type. Conversely, we prove that every normal projective \(\mathbb Q\)-factorial
index-one log Calabi--Yau surface pair with reduced boundary, ample boundary
support, and a nontrivial \(\mathbb G_m\)-action arises from this single-shear
construction. We also construct a global family
interpolating between the two toric degenerations associated with the two
charts, and compute the Cox rings of the resulting tropical mutation
surfaces.
\end{abstract}

\maketitle

\setcounter{tocdepth}{1}
\tableofcontents
\section{Introduction}

Toric geometry translates the geometry of algebraic varieties with a dense
torus into the combinatorics of lattices and polytopes. In particular, the
Laurent polynomial ring of a lattice plays the role of the coordinate ring of
the algebraic torus, and lattice polytopes determine projective toric
compactifications. In~\cite{EHM24}, Escobar, Harada, and Manon introduced
\emph{polyptych lattices} as a framework for extending this correspondence
beyond the toric setting. A polyptych lattice is a collection of lattices,
called charts, glued by piecewise-linear identifications called mutations
\cite{Akhtar_2015,KAN}. To such a polyptych lattice $\mathcal M$, they
associate a commutative algebra $A_{\mathcal M}$, called a detropicalized
algebra. Its spectrum $U_{A_{\mathcal M}}:=\Spec A_{\mathcal M}$
is called the affine tropical mutation variety (cf.~Definition~\ref{def:detro}).

The goal of this paper is to study the first genuinely non-toric case in
detail. We focus on the rank-two shearing polyptych lattices $\mathcal M_s$,
introduced in~\cite{CEHM24}. Each $\mathcal M_s$ is obtained by gluing two
rank-two lattices by a shear across two linear regions. This is arguably the
simplest polyptych lattice with a nontrivial mutation, and therefore provides a
natural testing ground for the geometry of tropical mutation varieties.

In this case, the detropicalized algebra is completely explicit. A
detropicalization of $\mathcal M_s$ is determined by a monic polynomial
$f(y)=y^s+b_{s-1}y^{s-1}+\cdots+b_1y+1
        \in \mathbb K[y],$
and the corresponding affine tropical mutation surface is $U_f
        =
        \Spec \mathbb K[x_1,x_2,y^{\pm 1}]/\langle x_1x_2-f(y)\rangle .$
Thus the rank-two shearing case replaces the algebraic torus
$\mathbb G_m^2$ by the explicit log Calabi--Yau surface $x_1x_2=f(y)$.
Different choices of $f$ give nontrivial deformations of the affine surface,
a phenomenon absent in the toric case. 
Our first result classifies these detropicalizations.

\begin{introthm}[cf.~Theorem~\ref{theorem:moduli}]\label{thm:intro_moduli}
Let \(s\geq 1\), and let \(\zeta\) be a primitive \(s\)-th root of unity. Let
\(\mu_s=\langle \zeta\rangle\subset \mathbb K^\times\), and let $D_{2s}:=\mu_s\rtimes\langle \tau\rangle$ be the dihedral group. It acts on $\mathbb A^{s-1}=\Spec \mathbb K[b_1,\ldots,b_{s-1}]$
by
\[
        \zeta\cdot(b_1,\ldots,b_{s-1})
        =
        (\zeta b_1,\zeta^2 b_2,\ldots,\zeta^{s-1}b_{s-1})\qquad\text{and}\qquad
        \tau\cdot(b_1,\ldots,b_{s-1})
        =
        (b_{s-1},b_{s-2},\ldots,b_1).
\]
Then the coarse moduli space of detropicalizations of \(\mathcal M_s\) is $\mathbb A^{s-1}/D_{2s}.$
\end{introthm}

Escobar, Harada, and Manon also associate compactifications to polyptych
polytopes. Given a polytope \(\mathcal P\subset \mathcal M\), they construct a
projective compactification of \(U_{A_{\mathcal M}}\) by adding a boundary
divisor \(B(\mathcal P)\), which we denote by $X_{A_{\mathcal M}}(\mathcal P).$
When no mutation is present, this recovers the usual toric compactification
associated to a lattice polytope. In the shearing case, we write $(X_f(\mathcal P),B(\mathcal P))$
for the resulting tropical mutation surface pair.

A guiding question is how close these pairs are to toric pairs. For a toric
surface pair \((T,B_T)\), the boundary has complexity zero in the sense of
Brown--McKernan--Svaldi--Zong~\cite{BMSZ18}. We show that tropical mutation
surface pairs have a similarly rigid complexity theory: their complexity is
independent of the compactifying polytope and is controlled only by the
factorization type of \(f\).

\begin{introthm}[cf.~Corollary~\ref{cor:complexity}]\label{cluster Theorem}
\label{thm:intro_complexity}
Let \(\mathcal P\) be a polytope in the rank-two shearing polyptych lattice
\(\mathcal M_s\), and let $(X_f(\mathcal P),B(\mathcal P))$
be the associated tropical mutation surface pair. Then:
\begin{enumerate}
    \item \(B(\mathcal P)\) supports an effective ample divisor;
    \item \(\mathbb G_m\leq \Aut(X_f(\mathcal P),B(\mathcal P))\);
    \item \((X_f(\mathcal P),B(\mathcal P))\) is of cluster type; and
    \item the complexity of \((X_f(\mathcal P),B(\mathcal P))\) is equal to
    the number of distinct roots of \(f\).
\end{enumerate}
\end{introthm}

The previous theorem shows that tropical mutation surface pairs form a natural
class of log Calabi--Yau \(\mathbb G_m\)-surfaces. Conversely, we prove that
this class is intrinsic. Namely, in dimension two, the shearing tropical
mutation construction is not merely a source of examples: it characterizes all
\(\mathbb Q\)-factorial index-one log Calabi--Yau surface pairs with ample
boundary support and a nontrivial \(\mathbb G_m\)-action.

\begin{introthm}[cf.~Theorem~\ref{thm:cluster-tropical}]\label{thm:intro_characterization}
Let \((X,B)\) be an index-one normal projective \(\mathbb Q\)-factorial
log Calabi--Yau surface pair with reduced boundary. Assume that \(B\)
supports an effective ample divisor and that \(\mathbb G_m\leq \Aut(X,B)\)
acts nontrivially. Then \((X,B)\) is a shearing tropical mutation surface pair.
\end{introthm}

The proof proceeds by first showing that such a pair is of cluster type. Then,
using the \(\mathbb G_m\)-action, we show that its cluster presentation may be
chosen so that all non-toric boundary centers lie on a single boundary
component after equivariant elementary transformations. This reduces the pair
to the inverse of an \(s\)-collinear blow-up of a toric surface pair, which is
precisely the surface operation encoded by the rank-two shearing mutation.

We next study toric degenerations. For a polytope
\(\mathcal P\subset\mathcal M_s\), let \(P_1\) and \(P_2\) denote its images in
the two toric charts of \(\mathcal M_s\), and let \(T(P_i)\) be the associated
projective toric surfaces. The tropical mutation surface \(X_f(\mathcal P)\)
degenerates to both toric surfaces. We construct a single global family
interpolating between these two toric degenerations.

\begin{introthm}[cf.~Theorems~\ref{toricdegene} and~\ref{coincidewith}]
\label{thm:intro_global_mutation}
Let \(s\geq 1\), and let \(f\in \mathbb K[y]\) be a monic polynomial of degree
\(s\) with constant term \(1\). Let \(\mathcal P\subset\mathcal M_s\) be a
polytope, and let \(P_1\) and \(P_2\) be its two chart images. Then there
exists a projective flat family
\[
        \pi_{f,\mathcal P}\colon
        \mathcal X_f(\mathcal P)\longrightarrow \mathbb P^1\qquad \text{such that}
 \qquad     \mathcal X_{f,0}(\mathcal P)\cong T(P_1),
        \qquad
        \mathcal X_{f,\infty}(\mathcal P)\cong T(P_2),
\]
\[
\text{and}\qquad         \mathcal X_f(\mathcal P)|_{\mathbb P^1\setminus\{0,\infty\}}
        \cong
        X_f(\mathcal P)\times(\mathbb P^1\setminus\{0,\infty\}).
\]
Moreover, the total space admits a complexity-one \(T\)-variety description:
there is a divisorial fan \(\mathcal S_f\) on
\(\mathbb P^1\times\mathbb P^1\) such that $\mathcal X_f(\mathcal P)\cong X(\mathcal S_f),$
and the morphism \(\pi_{f,\mathcal P}\) is induced by projection to the second
factor.
\end{introthm}

Finally, we compute the Cox rings of the tropical mutation surfaces
\(X_f(\mathcal P)\). We first show that these surfaces are Mori dream spaces.
Then, using the description of complexity-one \(T\)-varieties by polyhedral
divisors due to Hausen and Süß~\cite{HS10}, we obtain explicit generators and
relations. The result expresses the Cox ring entirely in terms of the
combinatorial data of the tropical mutation boundary and the roots of \(f\).

\begin{introthm}[cf.~Theorem~\ref{thm:Cox_ring_presentation}, Corollary~\ref{criteria}]
\label{thm:intro_cox}
Let $f(y)=\prod_{k=1}^{\gamma}(y-\alpha_k)^{\beta_k}$
be a degree \(s\) polynomial with \(f(0)=1\), where
\(\alpha_k\in\mathbb K^\times\) and \(\sum_{k=1}^{\gamma}\beta_k=s\).
Let \(\mathcal P\subset\mathcal M_s\) be a polytope defined by tropical
points $p_i=(a_i,b_i,c_i),\qquad i=1,\ldots,n,$
ordered so that $c_1,\ldots,c_j>0,\,
        c_{j+1},\dots,c_m<0,$
and the remaining \(c_i\)'s are zero. Let \(D_i\) be the irreducible boundary
divisor corresponding to \(p_i\), and let $C_1,\ldots,C_{2\gamma}$
be the interior divisors arising from the distinct roots of \(f\). Then:

\begin{enumerate}
\item The class group of \(X_f(\mathcal P)\) is
\[
\Cl(X_f(\mathcal P))
\cong
\frac{
\mathbb Z\langle D_1,\ldots,D_n,C_1,\ldots,C_{2\gamma}\rangle
}{
\left\langle
\sum_{i=1}^n c_iD_i,\;
\sum_{i=1}^n a_iD_i+\sum_{k=1}^{\gamma}\beta_k C_{2k},\;
-\sum_{i=1}^j c_iD_i+C_{2k-1}+C_{2k}
\;\middle|\;
k=1,\ldots,\gamma
\right\rangle
}.
\]

\item The Cox ring of \(X_f(\mathcal P)\) is
\[
\operatorname{Cox}(X_f(\mathcal P))
\cong
\frac{
\mathbb K[w_1,\ldots,w_{n+2\gamma}]
}{
\left\langle
w_{n+2k-1}w_{n+2k}
+\alpha_k\prod_{i=1}^j w_i^{c_i}
-\prod_{i=j+1}^m w_i^{-c_i}
\;\middle|\;
k=1,\ldots,\gamma
\right\rangle
}.
\]
Here \(w_i\) is the canonical section of \(D_i\) for \(1\leq i\leq n\), and
\(w_{n+k}\) is the canonical section of \(C_k\) for
\(1\leq k\leq 2\gamma\).

\item The surface \(X_f(\mathcal P)\) is toric if and only if one of the
following combinatorial conditions holds:
\begin{enumerate}
    \item \(f\) has exactly one distinct root, and either exactly one
    \(c_i=1\) with all other \(c\)-weights nonpositive, or exactly one
    \(c_i=-1\) with all other \(c\)-weights nonnegative;
    \item \(f\) has exactly two distinct roots, exactly one \(c_i=1\),
    exactly one \(c_k=-1\), and all other \(c\)-weights are zero.
\end{enumerate}
\end{enumerate}
\end{introthm}

Together, these results show that the rank-two shearing polyptych lattices
provide a complete and computable model for the simplest non-toric tropical
mutation surfaces: their detropicalizations have an explicit moduli space,
their compactifications are intrinsically characterized among log
Calabi--Yau \(\mathbb G_m\)-surfaces, their toric degenerations fit into a
single global family, and their Cox rings admit concrete complete intersection
presentations.
\medskip

\textbf{Acknowledgments.} 
The author is grateful to Joshua Enwright, Laura Escobar, Megumi Harada, Nathan Ilten, Christopher Manon, Joaquín Moraga, and Burt Totaro for many helpful comments and suggestions.
The author is especially grateful to Joaquín Moraga for his invaluable guidance and supervision throughout this project.
The author thanks Nathan Ilten for many comments that helped to improve the content of this paper.

\section{Preliminaries}\label{sec: pre}

Throughout the article, we assume that $\mathbb{K}$ is an algebraically closed field of characteristic zero. We use the term polyptych lattice to mean a finite polyptych lattice. We adopt the framework of polyptych lattices and detropicalized algebras developed in~\cite{EHM24}.

\begin{definition}\label{def:detro}
{\rm (cf.~\cite[Definition~6.3]{EHM24})}
\emph{Let $\mathcal{M}$ be a polyptych lattice. Let 
$A_{\mathcal{M}}$ be a finitely generated detropicalized algebra.
The \emph{affine tropical mutation variety} is defined as
$U_{A_\mathcal{M}} := \operatorname{Spec}(A_{\mathcal{M}})$.}
\end{definition}

Throughout this article, we consider finitely generated $\mathbb{K}$-detropicalized algebras, as opposed to the more general Noetherian $\mathbb{K}$-algebras considered in \cite{EHM24}. For a fixed polyptych lattice, the choice of detropicalized algebra is not unique.

A detropicalized algebra associated with a strictly dualizable polyptych lattice (cf.~\cite[Definition~4.1]{EHM24}) admits a natural $\mathbb{K}$-vector space basis, called an adapted basis (cf.~\cite[Definition~6.5]{EHM24}). In this article, we denote elements of the adapted basis by $\theta$, using a notation that differs slightly from that of the original.

We use the term polytope in place of what is referred to as a 
PL polytope (cf.~\cite[Definition~5.1]{EHM24}) in the original literature. All polytopes will be 
assumed to be convex and integral and containing the origin.

\begin{definition}\label{Tropical}
{\rm (cf.~\cite[Theorem~7.6]{EHM24})}
\emph{The \emph{polytope algebra} is the graded algebra $A^{\mathcal{P}}_{\mathcal{M}} := \bigoplus_{k=0}^{\infty} \Gamma(U_{A_\mathcal{M}}, k\mathcal{P}) \cdot t^k.$
The associated projective variety
$X_{A_\mathcal{M}}(\mathcal{P}) := \operatorname{Proj} A^{\mathcal{P}}_{\mathcal{M}}$
is called the \emph{tropical mutation variety associated with $\mathcal{P}$}.}
\end{definition}

\begin{definition}\label{degenerable}
{\rm (cf.~\cite[Theorem~7.11]{EHM24})}
\emph{Let $\mathcal{M}$ be a strictly dualizable polyptych lattice and 
$(\mathcal{M}, \mathcal{N}, \operatorname{v}, \operatorname{w})$ a strictly dualizable pair. 
$\mathcal{M}$ is \emph{degenerable} if it is detropicalizable 
and there exists a strictly dual pair such that, for every $n_i \in \mathcal{N}$, 
there is a chart $\alpha \in I$ with $\pi_{\alpha}(\operatorname{v}(n_i))$ linear on $M_{\alpha}$.}
\end{definition}

For clarity of exposition, we use the term \emph{tropical point} in place of what is referred to as a "point" in the original literature (cf.~\cite[Definition~3.1]{EHM24}). Now we establish a correspondence between polytopes and the boundary divisors of tropical mutation varieties.

\begin{definition}
\emph{Let $\mathcal{M}$ be a degenerable polyptych lattice, and let $\mathcal{P} \subset \mathcal{M}$ be a polytope bounded by inequalities of the form 
$p_i - \alpha_i$ for tropical points $p_i \in \operatorname{Sp}(\mathcal{M})$ and integers $\alpha_i \in \mathbb{Z}_{<0}$. }    
\end{definition}

\begin{theorem}\label{Ample}
{\rm (cf.~\cite[Theorem~7.11]{EHM24})}
Let $X_{A_\mathcal{M}}(\mathcal{P})$ be the tropical mutation variety associated with $\mathcal{P}$, 
and let $U_{A_\mathcal{M}}$ denote the corresponding affine tropical mutation variety. Then:
\begin{enumerate}
    \item $U_{A_\mathcal{M}}$ is a dense open subvariety of $X_{A_\mathcal{M}}(\mathcal{P})$; 
    \item the complement $B(\mathcal{P}):=X_{A_{\mathcal{M}}}(\mathcal{P})\setminus U_{A_{\mathcal{M}}}$ is a divisor;
    \item for each irreducible component $D_i \in \operatorname{Supp}(B(\mathcal{P}))$, there exists a facet $\mathcal{F}_i$ of $\mathcal{P}$ such that $D_i$ is the vanishing locus of the adapted basis away from the facet $\mathcal{F}_i$; and
    \item $B(\mathcal{P})$ supports an effective ample divisor.
\end{enumerate}
\end{theorem}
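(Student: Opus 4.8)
The plan is to exhibit $A^{\mathcal{P}}_{\mathcal{M}}$ as a Rees algebra, so that statements (1), (2) and (4) become formal consequences of the $\operatorname{Proj}$ construction, leaving the facet--divisor matching (3) as the only genuinely combinatorial step. First I would observe that the graded pieces $F_k := \Gamma(U_{A_\mathcal{M}}, k\mathcal{P}) \subseteq A_{\mathcal{M}}$ form an exhaustive increasing filtration with $F_0 = \mathbb{K}$ and $\bigcup_k F_k = A_{\mathcal{M}}$: concretely, each adapted basis element $\theta$ has a well-defined $\mathcal{P}$-degree, so $A^{\mathcal{P}}_{\mathcal{M}} = \bigoplus_k F_k t^k$ is the Rees algebra of this filtration. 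In particular $1 \in F_0 \subseteq F_1$ yields a degree-one element $t = 1 \cdot t$, and homogeneous localization gives $\bigl(A^{\mathcal{P}}_{\mathcal{M}}[t^{-1}]\bigr)_0 = \bigcup_k F_k = A_{\mathcal{M}}$. Hence $D_+(t) = \operatorname{Spec}(A_{\mathcal{M}}) = U_{A_\mathcal{M}}$. Since $A^{\mathcal{P}}_{\mathcal{M}}$ is a domain, $X_{A_\mathcal{M}}(\mathcal{P})$ is irreducible and $U_{A_\mathcal{M}} = D_+(t)$ is a nonempty, hence dense, affine open, which proves (1). Its complement $B(\mathcal{P}) = V_+(t)$ is the zero scheme of the nonzero, nonunit section $t$ of $\mathcal{O}_X(1)$, which on the irreducible projective variety $X$ is an effective Cartier divisor of pure codimension one, giving (2), together with the identification $\mathcal{O}_X(B(\mathcal{P})) \cong \mathcal{O}_X(1)$.

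Statement (4) then follows quickly: for $\operatorname{Proj}$ of a finitely generated graded $\mathbb{K}$-algebra the sheaf $\mathcal{O}_X(d)$ is very ample for all sufficiently divisible $d$, and since $\mathcal{O}_X(d) \cong \mathcal{O}_X\bigl(d\, B(\mathcal{P})\bigr)$, the effective divisor $d\,B(\mathcal{P})$ is ample with support contained in $B(\mathcal{P})$. Thus $B(\mathcal{P})$ supports an effective ample divisor. (Equivalently, after replacing $\mathcal{P}$ by a multiple so that $A^{\mathcal{P}}_{\mathcal{M}}$ is generated in degree one, $\mathcal{O}_X(1)$ is itself very ample.)

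The substantive step is (3): identifying the irreducible components of $B(\mathcal{P}) = V_+(t)$ with the facets $\mathcal{F}_i$ of $\mathcal{P}$. Here I would pass to the chart images $P_\alpha$ of $\mathcal{P}$, where the piecewise-linear structure becomes genuinely linear, and attach to each facet $\mathcal{F}_i = \{p_i - \alpha_i = 0\}$ the corresponding order-of-vanishing valuation $v_i$ on $A_{\mathcal{M}}$, hence a prime divisor $D_i$. I would then characterize $D_i$ as the locus on which precisely those adapted basis elements $\theta$ that are \emph{not} supported along $\mathcal{F}_i$ vanish, and recover the decomposition by writing $B(\mathcal{P}) = \sum_i a_i D_i$ with $a_i$ the lattice distance from a chosen interior tropical point to $\mathcal{F}_i$, matching each component to its facet.

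The main obstacle is exactly this last step. In contrast to the toric case, the adapted basis elements are not characters of a torus and are not all invertible on $U_{A_\mathcal{M}}$, so a single facet of $\mathcal{P}$ may fragment across the two linear domains of $\mathcal{M}_s$, and the normal-fan bookkeeping must be carried out chart by chart while checking compatibility under the mutation. The crux is to verify that the support of $V_+(t)$ is in bijection with the facets of $\mathcal{P}$ itself---rather than with the facets of the individual chart polytopes $P_\alpha$---and that no spurious interior components are introduced by the gluing; this is precisely where the piecewise-linear geometry of the mutation, rather than ordinary toric combinatorics, governs the argument.
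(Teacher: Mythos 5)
First, a point of orientation: the paper does not prove this statement at all --- it is imported verbatim as a preliminary from \cite[Theorem~7.11]{EHM24} --- so there is no internal proof to measure your argument against. Judged on its own terms, your Rees-algebra treatment of (1), (2) and (4) is essentially sound: the identification $D_+(t)=\operatorname{Spec}\bigl(\bigcup_k F_k\bigr)=U_{A_\mathcal{M}}$ is the standard $\operatorname{Proj}$ computation, and once $B(\mathcal{P})=V_+(t)$ is realized as the zero locus of a section of $\mathcal{O}_X(1)$, the ampleness of a multiple supported on $B(\mathcal{P})$ follows. Two caveats: you need $\mathcal{O}_X(1)$ to actually be invertible for the Cartier-divisor step and for $\mathcal{O}_X(d)\cong\mathcal{O}_X(d\,B(\mathcal{P}))$, which requires generation in degree one --- you note this only parenthetically, whereas it should be invoked up front via the normality hypothesis on $\mathcal{P}$ (cf.~Proposition~\ref{prop:normal}(3)); and the pure-codimension-one claim for $V_+(t)$ needs either that invertibility together with Krull's principal ideal theorem, or normality of $X$ plus the fact that the complement of a dense affine open is pure codimension one.

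The genuine gap is (3). You correctly flag it as the substantive step, but you do not prove it: you describe what you \emph{would} do (attach a divisorial valuation to each facet, characterize $D_i$ as the common zero locus of the adapted basis elements not supported on $\mathcal{F}_i$) and then name the obstruction --- that the components of $V_+(t)$ must be matched to facets of the piecewise-linear polytope $\mathcal{P}$ rather than to facets of the individual chart images $P_\alpha$, and that the mutation gluing must not create spurious components --- without resolving it. This is exactly where the content of \cite[Theorem~7.11]{EHM24} lives: one needs the strict-duality and adapted-basis machinery to show that $\operatorname{ord}_{D_i}$ is computed by a tropical point (as recorded in Proposition~\ref{prop:normal}(2)) and that every component of the boundary arises from a facet in this way. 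You also frame the difficulty in terms of the two linear domains of $\mathcal{M}_s$, whereas the statement is for a general polyptych lattice $\mathcal{M}$. As written, your argument establishes (1), (2), (4) modulo the caveats above but leaves (3) as an unproven assertion; you would either have to carry out the chart-by-chart valuation analysis in full or, as the paper does, cite the result.
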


\begin{definition}\label{def:trop_mut_boundary}
\emph{Let $\mathcal{P} \subset \mathcal{M}$ be a polytope. 
We define the divisor $B(\mathcal{P}) := X_{A_{\mathcal{M}}}(\mathcal{P}) \setminus U_{A_{\mathcal{M}}}$
and refer to it as the \emph{tropical mutation boundary}. 
The pair $\bigl(X_{A_{\mathcal{M}}}(\mathcal{P}), B(\mathcal{P})\bigr)$ 
is called the \emph{tropical mutation pair}. 
For each facet $\mathcal{F}_i$ of $\mathcal{P}$, we denote by $D_i$ 
the irreducible component of $B(\mathcal{P})$ corresponding to $\mathcal{F}_i$, 
and refer to $D_i$ as the \emph{boundary component associated with $\mathcal{F}_i$}.}
\end{definition}

When the choice of $\mathcal{P}$ is clear, we will simply denote the tropical mutation boundary as $B$. From now on, we will also assume all polyptych lattices are degenerable. Furthermore, every facet is defined by an equation of the form $p_i-\alpha_i=0$ for a tropical point $p_i\in \operatorname{Sp}(\mathcal{M})$ and $\alpha_i\in \mathbb{Z}_{<0}.$ 

Let $X_{A_{\mathcal{M}}}(\mathcal{P})$ be a tropical mutation variety whose affine tropical mutation variety $U_{A_{\mathcal{M}}}$ is a normal variety, and $\mathcal{P}$ is a normal polytope (cf.~\cite[Definition~7.13]{EHM24})\footnote{Throughout this article we assume polytopes are normal. Note, every two-dimensional integral polytope is normal~\cite[Corollary~2.2.13]{CLS11}, and Proposition~\ref{classification} shows that $A_{\mathcal{M}_s}$ is normal under any detropicalization.}. Then $X_{A_{\mathcal{M}}}(\mathcal{P})$ is embedded into projective space by the adapted basis. 

\begin{proposition}\label{prop:normal}
{\rm (cf.~\cite[Lemma 7.14 and Proposition 7.16]{EHM24})} 
Let $\mathcal{M}$ be a polyptych lattice and $\mathcal{N}$ its strict dual $\operatorname{w}\colon \mathcal{N}\rightarrow \operatorname{Sp}(\mathcal{M})$. Let $\mathcal{P} \subset \mathcal{M}$ be a normal polytope defined by the tropical points $\operatorname{w}(n_1) = \alpha_1, \dots, \operatorname{w}(n_k) = \alpha_k$. Let $D_i$ be the divisor associated with the tropical point $\operatorname{w}(n_i)$. If $A_{\mathcal M}$ is normal, then the following statements hold:
\begin{enumerate}
\item $X_{A_\mathcal{M}}(\mathcal P)$ is normal;
\item the valuation $\operatorname{ord}_{D_i}\colon A_{\mathcal M}\!\setminus\!\{0\}\to\mathbb Z$ coincides with the composition $\mathfrak{v}\circ n_i$; and
\item for any integral polytope $\mathcal P$, the algebra $A^{\mathcal P}_{\mathcal M}$ is generated in degree~\(\!1\). 
\end{enumerate}
\end{proposition}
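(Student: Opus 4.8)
The plan is to read these three assertions as the polyptych--lattice analogues of the standard facts relating a lattice polytope to its projective toric variety, and to reduce each one to a computation in the detropicalized algebra $A_{\mathcal{M}}$ via the adapted basis $\{\theta_m\}$. The organizing principle is that, for a chart $\alpha$ on which the relevant tropical points are linear, $A_{\mathcal{M}}$ carries a filtration whose associated graded is the semigroup algebra of a genuine lattice; under this degeneration the $\theta_m$ behave like torus characters $\chi^m$ up to lower-order corrections coming from the mutations. I would prove the statements in the order (2), (3), (1), since the valuation in (2) and the degree-one generation in (3) both feed into the normality assertion (1). For (2) I would anchor the dictionary between $\mathcal{N}$ and the boundary divisors: by Theorem~\ref{Ample}(3), the component $D_i$ is cut out by the vanishing of the adapted basis away from the facet $\mathcal{F}_i$ attached to $\operatorname{w}(n_i)$. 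Since $\mathcal{M}$ is degenerable, there is a chart $\alpha$ on which $\operatorname{w}(n_i)$ is linear; passing to the generic point of $D_i$ in that chart makes the local picture toric, where the classical formula $\operatorname{ord}_{D_i}(\chi^m)=\langle n_i,m\rangle$ identifies $\operatorname{ord}_{D_i}$ with $\mathfrak{v}\circ n_i$ on basis elements. As both sides are valuations on $A_{\mathcal{M}}\setminus\{0\}$ agreeing on the generating $\theta_m$, extending multiplicatively gives the equality on all of $A_{\mathcal{M}}\setminus\{0\}$.

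For (3), let $S_{\mathcal{P}}$ be the graded semigroup $\bigcup_k\bigl((k\mathcal{P}\cap\mathcal{M})\times\{k\}\bigr)$. Normality of $\mathcal{P}$ says precisely that $S_{\mathcal{P}}$ is generated in degree one, i.e.\ $k\mathcal{P}\cap\mathcal{M}$ is the $k$-fold Minkowski sum of $\mathcal{P}\cap\mathcal{M}$ with itself. The task is to transport this decomposition from the combinatorics of $\mathcal{M}$ to the algebra $A^{\mathcal{P}}_{\mathcal{M}}$. I would argue by induction on the leading term: a homogeneous element of degree $k$ has leading term a character $\chi^m$ with $m\in k\mathcal{P}$, which by polytope normality factors through degree-one generators; subtracting the matching product of degree-one elements of $A^{\mathcal{P}}_{\mathcal{M}}$ strictly lowers the leading term, and iterating terminates. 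Hence $\Gamma(U_{A_\mathcal{M}},k\mathcal{P})$ is spanned by $k$-fold products from $\Gamma(U_{A_\mathcal{M}},\mathcal{P})$, which is the assertion.

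Finally, for (1) I would use (3) to realize $X_{A_\mathcal{M}}(\mathcal{P})=\operatorname{Proj}A^{\mathcal{P}}_{\mathcal{M}}$ via its degree-one piece and check normality on an affine cover. The dense chart is $U_{A_\mathcal{M}}=\operatorname{Spec}A_{\mathcal{M}}$, normal by hypothesis, so it remains to treat the charts meeting the boundary $B(\mathcal{P})$, which I would index by the vertices of $\mathcal{P}$ in analogy with the affine toric charts $\operatorname{Spec}\mathbb{K}[\sigma^\vee\cap M]$. On such a chart, normality of $A_{\mathcal{M}}$ together with the saturation of the corresponding vertex semigroup (a consequence of $\mathcal{P}$ being normal) yields integral closedness, and gluing gives normality of $X_{A_\mathcal{M}}(\mathcal{P})$. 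As a consistency check, Serre's condition $R_1$ is guaranteed along the boundary by (2), since each $\operatorname{ord}_{D_i}$ is then a genuine discrete valuation on the function field.

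The step I expect to be the main obstacle is the degree-one generation (3). In the toric case $\theta_a\theta_b=\theta_{a+b}$ exactly, so degree-one generation \emph{is} the semigroup statement; in a polyptych lattice the product equals $\theta_{a+b}$ only modulo corrections dictated by the mutations, so the Minkowski decomposition must be pushed through the filtration by the leading-term argument above. The delicate point is to verify that these correction terms lie in strictly lower filtration degree, so that the induction on leading terms terminates and the combinatorial decomposition is not obstructed.
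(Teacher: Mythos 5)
The paper does not prove this proposition at all: it is imported verbatim from \cite[Lemma 7.14 and Proposition 7.16]{EHM24} as background, so there is no in-paper argument to compare yours against. Judged on its own, your outline follows the strategy one would expect from that source (valuative description of the filtration, leading-term induction for degree-one generation, normality via the affine cover), but it has gaps at exactly the load-bearing points. In (2), the step ``both sides are valuations agreeing on the generating $\theta_m$, so they are equal'' is not valid as stated: two valuations that agree on a vector-space basis need not coincide. What rescues it is the defining property of an adapted basis, namely that $\mathfrak{v}$ of an arbitrary element is computed as the extremum of $\mathfrak{v}$ over the $\theta_m$ appearing in its expansion (a Khovanskii-basis condition), and you must also check that $\operatorname{ord}_{D_i}$ of such an element is the minimum of $\operatorname{ord}_{D_i}(\theta_m)$ over the support; neither follows from multiplicativity alone.

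In (3), the point you flag as delicate is indeed the whole content, and your sketch does not resolve it: you need that every term of $\theta_a\theta_b$ has $\mathfrak{v}$-value lying in $2\mathcal{P}$ (so that the product stays in $\Gamma(U_{A_\mathcal{M}},2\mathcal{P})$ and the filtration is multiplicative), and that the correction terms are strictly smaller in a well-founded order so the induction terminates. The first assertion is the quasivaluation inequality $n(\mathfrak{v}(fg))\ge n(\mathfrak{v}(f))+n(\mathfrak{v}(g))$ for every tropical point $n$ cutting out $\mathcal{P}$; without invoking it explicitly the reduction to polytope normality is circular. For (1), the ``vertex charts'' are a toric heuristic rather than the actual affine cover of $\operatorname{Proj}A^{\mathcal{P}}_{\mathcal{M}}$ (which consists of degree-zero parts of localizations at degree-one elements); a cleaner route, and the one consistent with (2), is to observe that $A^{\mathcal{P}}_{\mathcal{M}}\subset A_{\mathcal{M}}[t]$ is cut out by finitely many valuation inequalities $\operatorname{ord}_{D_i}(\cdot)\ge k\alpha_i$, hence is integrally closed in the normal ring $A_{\mathcal{M}}[t]$, and $\operatorname{Proj}$ of a normal graded domain generated in degree one is normal.
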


Tropical mutation varieties admit several toric degenerations over $\mathbb{A}^1$. In Theorem~\ref{toricdegene} below, we combine these degenerations into a single family defined over projective space.
\begin{theorem}\label{toricdegene}
Let
$
\mathcal M = \left( (M_\alpha)_{\alpha\in I}, (\mu_{\alpha,\beta})_{\alpha,\beta\in I} \right)
$
be a degenerable polyptych lattice with $|I|=m$. Let $\mathcal P\subset \mathcal M$ be a normal polytope, and let $P_\alpha$ be the image of $\mathcal P$ in the chart $M_\alpha$. Assume that the multi-Rees algebra associated with the homogeneous chart valuations $(\widetilde{\mathfrak v}_\alpha)_{\alpha\in I}$ is finitely generated. Then there exists a flat projective family
$
\mathfrak X_{A_{\mathcal M}}(\mathcal P) \longrightarrow \mathbb P^{m-1}
$
such that:
\begin{enumerate}
    \item its restriction over the dense torus $\mathbb G_m^{m-1}\subset \mathbb P^{m-1}$ is the trivial family with fiber $X_{A_{\mathcal M}}(\mathcal P)$;
    \item for each $\alpha\in I$, the fiber over a general point of the coordinate hyperplane
    $
    H_\alpha = \{ \tau_\alpha=0 \} \subset \mathbb P^{m-1}
    $
    is isomorphic to the toric variety $T(P_\alpha)$.
\end{enumerate}
\end{theorem}

\begin{proof}
Let
$
A_{\mathcal M}^{\mathcal P} = \bigoplus_{d\geq 0} \Gamma(U_{A_{\mathcal M}}, d\mathcal P)t^d
$
be the polytope algebra. For each chart $\alpha\in I$, let $\widetilde{\mathfrak v}_\alpha$ be the homogeneous chart valuation of \cite[Lemma~7.8]{EHM24}. We normalize these valuations so that their values on each graded piece of $A_{\mathcal M}^{\mathcal P}$ are nonnegative.

For $\mathbf k=(k_\alpha)_{\alpha\in I}\in\mathbb Z_{\geq 0}^I$, set
$F_{\leq \mathbf k}A_{\mathcal M}^{\mathcal P} := \left\{ s\in A_{\mathcal M}^{\mathcal P} \mid\,\widetilde{\mathfrak v}_\alpha(s)\leq k_\alpha \text{ for every } \alpha\in I \right\}.$
These subspaces define a multiplicative multi-filtration. Indeed, if $s\in F_{\leq \mathbf k}$ and $s'\in F_{\leq \mathbf l}$, then
$
\widetilde{\mathfrak v}_\alpha(ss') \leq \widetilde{\mathfrak v}_\alpha(s) + \widetilde{\mathfrak v}_\alpha(s') \leq k_\alpha+l_\alpha
$
for every $\alpha\in I$. Define the multi-Rees algebra
$$
\mathcal R_{A_{\mathcal M}}^{\mathcal P} := \bigoplus_{\mathbf k\in\mathbb Z_{\geq 0}^I} F_{\leq \mathbf k}A_{\mathcal M}^{\mathcal P}\cdot \tau^{\mathbf k} \subset A_{\mathcal M}^{\mathcal P}[\tau_\alpha\mid \alpha\in I], \qquad \tau^{\mathbf k}:=\prod_{\alpha\in I}\tau_\alpha^{k_\alpha}.
$$
By assumption, $\mathcal R_{A_{\mathcal M}}^{\mathcal P}$ is finitely generated over $\mathbb K[\tau_\alpha\mid \alpha\in I]$. We regard $\mathcal R_{A_{\mathcal M}}^{\mathcal P}$ as graded by the original polytope degree. Thus
$
\mathcal R_{A_{\mathcal M}}^{\mathcal P} = \bigoplus_{d\geq 0} \left(\mathcal R_{A_{\mathcal M}}^{\mathcal P}\right)_d,
$
where
$
\left(\mathcal R_{A_{\mathcal M}}^{\mathcal P}\right)_d = \bigoplus_{\mathbf k\in\mathbb Z_{\geq 0}^I} F_{\leq \mathbf k} \left(A_{\mathcal M}^{\mathcal P}\right)_d \tau^{\mathbf k}.
$
Each piece $\left(\mathcal R_{A_{\mathcal M}}^{\mathcal P}\right)_d$ is still a graded $\mathbb K[\tau_\alpha\mid \alpha\in I]$-module with respect to the standard $\tau$-grading.

By the adapted-basis property, the vector space $\left(A_{\mathcal M}^{\mathcal P}\right)_d$ has basis
$\theta_q t^d, \,q\in d\mathcal P\cap \mathcal M,$
where $q$ ranges over the tropical integral points of $d\mathcal P$. The homogeneous chart valuations are diagonal with respect to this basis. Write
$
w_\alpha(q,d) := \widetilde{\mathfrak v}_\alpha(\theta_q t^d).
$
Then
$
\theta_q t^d \in F_{\leq \mathbf k} \left(A_{\mathcal M}^{\mathcal P}\right)_d \quad \Longleftrightarrow \quad w_\alpha(q,d)\leq k_\alpha \text{ for every } \alpha\in I.
$
Hence we have a direct sum decomposition
$$
\left(\mathcal R_{A_{\mathcal M}}^{\mathcal P}\right)_d = \bigoplus_{q\in d\mathcal P\cap \mathcal M} \left( \prod_{\alpha\in I} \tau_\alpha^{w_\alpha(q,d)} \right) \mathbb K[\tau_\alpha\mid \alpha\in I]\cdot \theta_q t^d.
$$
In particular, each polytope-degree piece $\left(\mathcal R_{A_{\mathcal M}}^{\mathcal P}\right)_d$ is a free $\mathbb K[\tau_\alpha\mid \alpha\in I]$-module, up to the indicated $\tau$-degree shifts.

Now set
$
S := \mathbb P^{m-1} = \operatorname{Proj}\Bbbk[\tau_\alpha\mid \alpha\in I].
$
For each $d$, let
$
\mathscr R_d := \widetilde{ \left(\mathcal R_{A_{\mathcal M}}^{\mathcal P}\right)_d }
$
be the quasi-coherent sheaf on $S$ associated to the $\tau$-graded $\Bbbk[\tau_\alpha\mid \alpha\in I]$-module $\left(\mathcal R_{A_{\mathcal M}}^{\mathcal P}\right)_d$. The multiplication on the Rees algebra gives
$
\mathscr R := \bigoplus_{d\geq 0}\mathscr R_d
$
the structure of a graded $\mathcal O_S$-algebra, where the grading is the polytope degree. Since the Rees algebra is finitely generated, $\mathscr R$ is a finitely generated graded $\mathcal O_S$-algebra. Define
$$
\mathfrak X_{A_{\mathcal M}}(\mathcal P) := \operatorname{Proj}_S \mathscr R.
$$
Projectivity over $S$ follows from the relative Proj construction. Moreover, by the decomposition above, each $\mathscr R_d$ is a finite direct sum of line bundles on $S$. Hence each $\mathscr R_d$ is locally free, and its rank is independent of the point of $S$. After replacing $\mathscr R$ by a sufficiently divisible Veronese subalgebra, which does not change the relative Proj, we may assume that $\mathscr R$ is generated in degree one.

The Hilbert function of the fibers with respect to the relative $\mathcal O(1)$ is then constant, since it is given by the ranks of the locally free sheaves $\mathscr R_d$. Therefore, by the standard flatness criterion for projective morphisms with constant Hilbert polynomial,
$
\mathfrak X_{A_{\mathcal M}}(\mathcal P) \longrightarrow S
$
is flat. 

It remains to identify the fibers. Over the dense torus $\mathbb G_m^{m-1}\subset S$, all parameters $\tau_\alpha$ are invertible. Thus all Rees homogenizing factors become units, and we obtain
$$
\mathcal R_{A_{\mathcal M}}^{\mathcal P} [\tau_\alpha^{-1}\mid \alpha\in I] \cong A_{\mathcal M}^{\mathcal P} [\tau_\alpha^{\pm 1}\mid \alpha\in I].
$$
Consequently, the restriction of the family over the dense torus is the trivial family with fiber $X_{A_{\mathcal M}}(\mathcal P)$.

Now fix $\alpha\in I$. Consider the open subset
$
H_\alpha^\circ := H_\alpha\cap \bigcap_{\beta\neq \alpha} \{ \tau_\beta\neq 0 \} \subset H_\alpha.
$
Over $H_\alpha^\circ$, we invert all $\tau_\beta$ with $\beta\neq \alpha$ and then set $\tau_\alpha=0$. This gives an isomorphism of graded algebras
$$
\mathcal R_{A_{\mathcal M}}^{\mathcal P} [\tau_\beta^{-1}\mid \beta\neq \alpha]/(\tau_\alpha) \cong \operatorname{gr}_{\widetilde{\mathfrak v}_\alpha} \left(A_{\mathcal M}^{\mathcal P}\right) [\tau_\beta^{\pm 1}\mid \beta\neq \alpha].
$$
By the toric degeneration theorem of Escobar--Harada--Manon, equivalently by \cite[Proposition~5.1]{MR3063911} together with \cite[Lemma~7.9]{EHM24}, the associated graded algebra
$
\operatorname{gr}_{\widetilde{\mathfrak v}_\alpha} \left(A_{\mathcal M}^{\mathcal P}\right)
$
is, as a graded algebra, the semigroup algebra of the chart polytope $P_\alpha$. Therefore
$
\operatorname{Proj} \operatorname{gr}_{\widetilde{\mathfrak v}_\alpha} \left(A_{\mathcal M}^{\mathcal P}\right) \cong T(P_\alpha).
$
Thus every fiber over $H_\alpha^\circ$, and in particular the fiber over a general point of $H_\alpha$, is isomorphic to $T(P_\alpha)$.
\end{proof}

Based on the theorem above, we define the \emph{global tropical mutation variety}. 

\begin{definition}\label{def:GTMV}
\emph{Let $\mathcal{M}$ be a polyptych lattice with $m$ charts, and let 
$\mathcal{P} \subset \mathcal{M}$ be a polytope.  
Define the Rees algebra
\[
\mathcal{R}^{\mathcal{P}}_{A_\mathcal{M}} 
:= \bigoplus_{\mathbf{k}\in\mathbb{Z}_{\geq 0}^m} 
F_{\leq \mathbf{k}} A^{\mathcal{P}}_{\mathcal{M}}\tau^{\mathbf{k}},
\]
associated with the multi–index filtration $F_{\leq \mathbf{k}}$ constructed above.  
The \emph{global tropical mutation variety associated with $\mathcal{P}$} is the relative Proj, 
 $\mathcal{X}_{A_\mathcal{M}}(\mathcal{P}) := \operatorname{Proj}_{\mathbb{P}^{m-1}}\!\big(\mathcal{R}^{\mathcal{P}}_{A_\mathcal{M}}\big).$}
\end{definition}

In this article, we mainly study tropical mutation surfaces arising from the polyptych lattice with a single shear, introduced by Cook–Escobar–Harada–Manon~\cite{CEHM24}. We refer to the tropical mutation surface associated with this polyptych lattice as a \emph{shearing tropical mutation surface.}

\begin{definition}\label{EHM}
{\rm (cf.~\cite[Section~3]{CEHM24})}
\emph{For $s\in\mathbb Z_{>0}$\footnote{The polyptych lattice $\mathcal{M}_s$ is defined for any integer $s$. However, $\mathcal{M}_s$ and $\mathcal{M}_{-s}$ are strongly isomorphic. Hence for simplicity of notation, we restrict to the case $s>0$. $\mathcal{M}_0$ is strongly isomorphic to the trivial lattice.}
the \emph{shearing polyptych lattice} $\mathcal{M}_s$ is defined by:
\[
\mathcal M_s := (M_1,M_2,\mu_{1,2}),
\qquad
\mu_{1,2}(x,y)=
\begin{cases}
(-x,\,y),      & y\ge 0,\\
(sy-x,\,y),    & y\le 0.
\end{cases}
\]
We write an element of $\mathcal{M}_s$ as $((x,y), (x',y))\in M_1\times M_2$ with $x+x'=\min\{0,sy\}$.
Since $\mathcal{M}_s$ is self-dual, every tropical point $p\in \operatorname{Sp}(\mathcal{M}_s)$ is represented by itself. Therefore, we define a tropical point 
$\operatorname{w}((a_i,c_i),(b_i,c_i))\colon\mathcal{M}_s\rightarrow \mathbb{Z}$. For $((x,y), (x',y))\in \mathcal{M}_s$:
\[
\operatorname{w}((a_i,c_i),(b_i,c_i))((x,y), (x',y))=
\begin{cases}
c_ix - b_iy, & y\le 0,\\
c_ix + a_iy, & y\ge 0,
\end{cases} 
\qquad \text{where}\qquad a_i+b_i=\min\{s c_i,0\}.\]}
\end{definition}

\section{Classification of affine tropical mutation varieties associated with \texorpdfstring{$\mathcal M_s$}{Ms}}

We classify the possible detropicalizations associated with the shearing
polyptych lattice \(\mathcal M_s\). 

\begin{proposition}\label{classification}
The detropicalizations of \(\mathcal M_s\) are precisely the hypersurface
algebras $A_f =
        \mathbb K[x_1,x_2,y^{\pm1}]/(x_1x_2-f(y)),$
where \(f(y)\in\mathbb K[y]\) has degree \(s\) and \(f(0)\neq 0\). More
precisely, every detropicalization of \(\mathcal M_s\) is isomorphic to
\(A_f\) for such a polynomial \(f\), and conversely every such polynomial
defines a detropicalization of \(\mathcal M_s\).

Let $f(y)=\prod_{i=1}^{\gamma}(y-\alpha_i)^{\beta_i},\, \alpha_i\in\mathbb K^*,$
then \(U_f\) has a Du Val singularity of type \(A_{\beta_i-1}\) at
\((0,0,\alpha_i)\). In particular, \(U_f\) is smooth at this point when
\(\beta_i=1\), and \(A_f\) is normal.
\end{proposition}

\begin{proof}
The equivalence between detropicalizations of \(\mathcal M_s\) and the
hypersurface algebras \(A_f\) follows from \cite[Theorem~6.20]{CEHM24}. The
construction there associates to a mutation factor \(f(y)\) the algebra
 $\mathbb K[x_1,x_2,y^{\pm1}]/(x_1x_2-f(y)),$
and the same argument shows conversely that every polynomial
\(f(y)\in\mathbb K[y]\) of degree \(s\) with \(f(0)\neq 0\) gives a
detropicalization of the shearing polyptych lattice \(\mathcal M_s\).

It remains to record the singularities. Since \(\mathbb K[x_1,x_2,y^{\pm1}]\) is the coordinate ring of \(\mathbb A^2\times\mathbb G_m\), the algebra \(A_f\) is a two-dimensional hypersurface ring, hence Cohen--Macaulay. The Jacobian criterion gives  $\operatorname{Sing}(U_f) = \{(0,0,\alpha)\mid f(\alpha)=f'(\alpha)=0\}. $ Near a root \(\alpha_i\) of multiplicity \(\beta_i\), writing \(t=y-\alpha_i\), the completed local equation is analytically equivalent to $x_1x_2=t^{\beta_i}.$ Thus the singularity is of type \(A_{\beta_i-1}\), and it is smooth when \(\beta_i=1\). Since the singular locus is finite, \(A_f\) satisfies \(R_1\); being a hypersurface, it satisfies \(S_2\). Hence \(A_f\) is normal by Serre's criterion.
\end{proof}

Next, we give a criterion for when two polynomials \(f\) and \(g\) produce
isomorphic affine tropical mutation varieties \(U_f \cong U_g\).

\begin{proposition}\label{prop:iso_and_aut}
Let \(f,g\in \mathbb K[y]\) have degree \(s\) and nonzero constant terms.
Then \(U_f\) and \(U_g\) are isomorphic if and only if there exist
\(\lambda,c\in \mathbb K^*\) such that
\[
        g(y)=\lambda f(cy)
        \qquad\text{or}\qquad
        g(y)=\lambda y^s f(c/y).
\]
Let \(f\) be normalized, say $f(y)=y^s+b_{s-1}y^{s-1}+\cdots+b_1y+1.$
Then
\[
        \operatorname{Aut}(U_f)
        \cong
        (\mathbb G_m\times \mathbb Z)
        \rtimes
        \bigl(\mu_2\times \operatorname{Aut}(f)\bigr),
\]
where \(\mu_2\) interchanges \(x_1\) and \(x_2\), and
\(\operatorname{Aut}(f)\leq D_{2s}\) is the subgroup consisting of pairs
\((\epsilon,c)\in \{0,1\}\times \mu_s\) satisfying $f(y)=y^{s\epsilon}f(cy^{(-1)^\epsilon}).$
\end{proposition}

\begin{proof}
Let $A_f=\mathbb K[x_1,x_2,y^{\pm1}]/(x_1x_2-f(y)).$
First we observe that $A_f^*=\mathbb K^* y^{\mathbb Z}.$
Indeed, using the natural \(\mathbb Z\)-grading
\(\deg x_1=1\), \(\deg x_2=-1\), and \(\deg y=0\), the highest and lowest
degree terms of a unit force the unit to be homogeneous of degree zero.
Thus every unit lies in \(\mathbb K[y^{\pm1}]^*\).

Hence any isomorphism \(\Phi:A_f\to A_g\) sends  $y\longmapsto cy^{\pm1}$
for some \(c\in\mathbb K^*\). After composing with the corresponding
automorphism of the Laurent coordinate, it remains to understand
isomorphisms over \(\mathbb K[y^{\pm1}]\).

Let \(K_0=\mathbb K(y)\). After tensoring with \(K_0\), both algebras become
Laurent polynomial algebras:
\[
        A_f\otimes_{\mathbb K[y^{\pm1}]}K_0
        \simeq K_0[x_1^{\pm1}],
        \qquad
        A_g\otimes_{\mathbb K[y^{\pm1}]}K_0
        \simeq K_0[x_1^{\pm1}].
\]
Therefore the induced automorphism of the generic fiber sends
\[
        x_1\longmapsto a(y)x_1
        \qquad\text{or}\qquad
        x_1\longmapsto a(y)x_2
\]
for some \(a(y)\in K_0^*\). Since both \(\Phi\) and \(\Phi^{-1}\) are regular
on the whole surface, this multiplier must be a Laurent unit. Thus
\(a(y)\in \mathbb K^*y^{\mathbb Z}\). Consequently \(\Phi\) sends
\(\{x_1,x_2\}\) to Laurent-monomial multiples of \(\{x_1,x_2\}\), possibly
interchanging them.

Applying \(\Phi\) to the defining relation now gives the desired condition.
If \(y\mapsto cy\), then $g(y)=\lambda f(cy)$
for some \(\lambda\in\mathbb K^*\). If \(y\mapsto c/y\), then clearing the
negative powers of \(y\) gives $g(y)=\lambda y^s f(c/y).$
Conversely, either formula gives an explicit isomorphism by the corresponding
change of the Laurent coordinate \(y\), together with a Laurent-monomial
rescaling of \(x_1\) and \(x_2\). This proves the classification of
isomorphisms.

Setting \(f=g\), the same description gives the automorphism group. The
factor \(\mathbb G_m\times\mathbb Z\) comes from $x_1\mapsto \lambda y^n x_1,\quad
        x_2\mapsto \lambda^{-1}y^{-n}x_2.$
The factor \(\mu_2\) is generated by the involution interchanging \(x_1\)
and \(x_2\). Finally, \(\operatorname{Aut}(f)\leq D_{2s}\) records the
allowed automorphisms of the Laurent coordinate \(y\mapsto cy\) and
\(y\mapsto c/y\) preserving the normalized polynomial \(f\). This gives the
claimed semidirect product description.
\end{proof}

\begin{theorem}\label{theorem:moduli}
The coarse moduli space of detropicalizations is isomorphic to \(\mathbb A^{s-1}/D_{2s}\).
\end{theorem}

\begin{proof}
The normalized mutation factors are parametrized by
\(\mathbb A^{s-1}/D_{2s}\) up to isomorphism.

By Proposition~\ref{prop:iso_and_aut}, locally in the \'etale topology on $S$, we can scale the coordinates of the family such that the leading and constant coefficients are equal to $1$. This local reduction isolates the $s-1$ intermediate coefficients as regular functions, inducing a local morphism $U \to \mathbb{A}^{s-1}$.

The residual equivalences between any two such local reductions are exactly:
\begin{enumerate}
    \item Scaling $y$ by $\zeta \in \mu_s$, which acts as $\zeta \cdot (b_1,\dots,b_{s-1}) = (\zeta b_1,\dots,\zeta^{s-1}b_{s-1})$.
    \item The transformation $f(y) \mapsto y^s f(1/y)$, which reverses the coefficients $(b_{s-1},\dots,b_1)$.
\end{enumerate}

These transformations precisely generate the standard action of the dihedral group $D_{2s}$ on $\mathbb{A}^{s-1}$. Because the local morphisms to $\mathbb{A}^{s-1}$ differ only by this $D_{2s}$ action, they uniquely descend and glue to form a well-defined global morphism $S \to \mathbb{A}^{s-1}/D_{2s}$. This induced map satisfies the universal property, establishing $\mathbb{A}^{s-1}/D_{2s}$ as the coarse moduli space.
\end{proof}
Consequently, the natural $\mathbb G_m$-action extends equivariantly to every projective tropical mutation surface.
\begin{corollary}
\label{torusmu_delpezzo}
\em{Let $X_{f}(\mathcal{P})$ be the projective tropical mutation surface associated to a polynomial $f$ and a polytope $\mathcal{P}\subset \mathcal{M}_s$. The natural action of $\mathbb{G}_m$ on $U_f$ uniquely extends to $X_f(\mathcal{P})$. Consequently, if $X_f(\mathcal{P})$ is a smooth del Pezzo surface, it must be toric.}
\end{corollary}

\begin{proof}
Let $G = \mathbb{G}_m \times \operatorname{Aut}(f)$. The $G$-action on the coordinate ring $A_f$, given by 
$$x_1 \mapsto \lambda x_1, \qquad x_2 \mapsto \lambda^{-1} x_2, \qquad y \mapsto cy^{\epsilon},$$
preserves the defining relation $x_1x_2 = f(y)$. 

Because this action scales or symmetrically interchanges Laurent monomials, it preserves the boundary valuations defining $\mathcal{P}$. Thus, the finite-dimensional linear system of sections $V_{\mathcal{P}} \subset A_f$ is a $G$-invariant subspace. The resulting linear representation of $G$ on $V_{\mathcal{P}}$ naturally induces an algebraic action on $\mathbb{P}(V_{\mathcal{P}})$, which restricts to its Zariski closure, $X_f(\mathcal{P})$.

By the classification of smooth del Pezzo surfaces, non-toric ones have strictly finite automorphism groups, forcing $X_f(\mathcal{P})$ to be toric.
\end{proof}

\section{Singularities of tropical mutation varieties}\label{sec:sing}
In this section, we study the singularities of tropical mutation varieties.

\begin{definition}
A log pair $(X, B)$ is called a \emph{log Calabi–Yau pair} if $K_X + B \sim 0$ and $(X, B)$ is log canonical.  
\end{definition}
We now study the singularities of projective tropical mutation surfaces.
\begin{proposition}\label{sm}
Let $\mathcal{P}$ be a normal polytope in $\mathcal{M}$, with tropical mutation surfaces
$X_{A_\mathcal{M}}(\mathcal{P})$ and boundary divisor $B=\sum D_i$. Let
$\mathcal{X}_{\alpha}\to\mathbb{A}^1$ be the degeneration to the toric variety
$T(P_{\alpha})$. Then:
\begin{enumerate}
\item Each boundary component $D_i$ specializes to a union of toric boundary components of $T(P_{\alpha})$.
\item $X_{A_\mathcal{M}}(\mathcal{P})$ has rational singularities. Moreover, if the corresponding local cones of the chart polytope $P_{\alpha}$ are smooth
\emph{(resp. Gorenstein)}, then $X_{A_\mathcal{M}}(\mathcal{P})$ is smooth
\emph{(resp. Gorenstein)} along the corresponding boundary strata.
\item Boundary singularities of $X_{A_\mathcal{M}}(\mathcal{P})$ can occur only at the strata corresponding to PL vertices of $\mathcal{P}$. Equivalently, the smooth loci of the boundary components away from the PL vertices are smooth points of $X_{A_\mathcal{M}}(\mathcal{P})$.
\item $(X_{A_{\mathcal{M}}}(\mathcal{P}),B)$ is an index $1$ log Calabi--Yau pair.
\item When $\mathcal{M}$ has rank two, the boundary singularities are at most cyclic quotient singularities.
\end{enumerate}
\end{proposition}

\begin{proof}
For (1), the adapted-basis embedding $\Phi_{\mathcal{P}}$ degenerates to the corresponding toric embedding. By \cite[Theorem 1]{KavehManon}, the adapted basis elements $\theta_{m_j}$ specialize to the toric characters $\chi_{m_{j,\alpha}}$. Therefore the vanishing loci defining the tropical mutation boundary specialize to unions of toric boundary strata in the central toric fiber $T(P_{\alpha})$. In particular, each $D_i$ specializes to a union of toric boundary components.

We next prove (2) and (3). The boundary of $X_{A_\mathcal{M}}(\mathcal{P})$ is covered by the affine charts determined by the facets and vertices of $\mathcal{P}$. On such a chart, the adapted-basis degeneration identifies the associated graded local algebra with the affine semigroup algebra of the corresponding toric cone in the chart polytope $P_{\alpha}$. Thus the local boundary singularities of $X_{A_\mathcal{M}}(\mathcal{P})$ are controlled by the same cones that control the boundary singularities of the toric model $T(P_{\alpha})$. Normal toric varieties have rational singularities, and smoothness and the Gorenstein property are determined by the corresponding cones; see \cite[Section 8.2]{CLS11}. Hence $X_{A_\mathcal{M}}(\mathcal{P})$ has rational singularities along the boundary, and it is smooth \emph{(resp. Gorenstein)} along the boundary strata whose corresponding local cones are smooth \emph{(resp. Gorenstein)}. Moreover, away from the vertices of $\mathcal{P}$, the boundary is locally modeled on the product of a smooth boundary divisor with a transverse parameter. Therefore no boundary singularity occurs at the general point of a boundary component. Possible boundary singularities occur only at the zero-dimensional boundary strata, equivalently at the strata corresponding to PL vertices of $\mathcal{P}$. This proves (3)~\cite[Chapter~9]{shi}.

For (4), the open complement $U_{A_\mathcal{M}}=X_{A_\mathcal{M}}(\mathcal{P})\setminus B$ carries the canonical logarithmic volume form coming from the detropicalized torus chart. Its divisor has simple logarithmic poles along the reduced boundary $B$ and no other zeros or poles. Consequently $K_{X_{A_\mathcal{M}}(\mathcal{P})}+B\sim 0.$
The local description above shows that the pair is toroidal along the boundary, up to the rational toric singularities described by the local cones. Hence $(X_{A_\mathcal{M}}(\mathcal{P}),B)$ is log canonical. Therefore $(X_{A_\mathcal{M}}(\mathcal{P}),B)$ is a log Calabi--Yau pair. This is compatible with the toric degeneration, whose central fiber is the toric log Calabi--Yau pair $(T(P_{\alpha}),B_{T(P_{\alpha})})$; cf.~\cite[Lemma 8.42]{GHKK}.

Finally, assume that $\mathcal{M}$ has rank two. Then the boundary singularities are surface singularities locally modeled on two-dimensional toric cones. Every normal affine toric surface singularity is a cyclic quotient singularity. Equivalently, by the classification of two-dimensional log canonical pairs with reduced boundary, such boundary singularities are cyclic quotient singularities; see \cite[Section 3.40]{Kol13}. This proves (5).
\end{proof}
\begin{proposition}\label{MDS} 
$\mathbb Q$-factorial tropical mutation surfaces are of Fano type. In particular, they are Mori dream spaces.
\end{proposition}
\begin{proof}
Let $(X,B)$ be an index-one projective tropical mutation surface pair. By Proposition~\ref{sm}, $(X,B)$ is $\mathbb Q$-factorial log canonical and 
$K_X+B\sim 0.$ 
Furthermore, by Theorem~\ref{Ample}(4), the boundary $B$ supports an effective ample $\mathbb Q$-divisor 
$A=\sum_i a_iD_i,$ $ a_i>0.$

On the interior $U=X\setminus B$, the pair simplifies to $(U,0)$. The index-one assumption on $(X,B)$ means that the Weil divisor $K_X+B$ is globally Cartier. Restricting to the interior where $B$ is empty, it follows that the canonical divisor $K_U = (K_X+B)|_U$ remains Cartier. Since log canonical surfaces are Cohen-Macaulay, the interior having a Cartier canonical divisor guarantees it has Gorenstein singularities. Because $X$ is $\mathbb Q$-factorial and $(X,B)$ is log canonical, these interior singularities are therefore Gorenstein klt. Their discrepancies are integers strictly greater than $-1$, meaning they are nonnegative; thus, they are canonical and consequently Du Val \cite[Theorem~4.20]{KM98}. As a result, any non-klt centers of $(X,B)$ must lie on the boundary.

A single toroidal blow-up at each non-dlt nodal stratum provides a dlt modification $\mu:(Y,B_Y)\longrightarrow (X,B)$ satisfying $K_Y+B_Y=\mu^*(K_X+B)$. Because $A$ has positive coefficients along $B$, its pullback $\mu^*A$ has positive coefficients along $B_Y$. Therefore, for $0<\epsilon\ll 1$, the pair $(Y,B_Y-\epsilon\mu^*A)$ is klt. By setting $\Delta:=B-\epsilon A$, we obtain $K_Y+B_Y-\epsilon\mu^*A = \mu^*(K_X+\Delta)$, which proves that $(X,\Delta)$ is klt.

Finally, we have
$-(K_X+\Delta) = -(K_X+B)+\epsilon A \sim_{\mathbb Q} \epsilon A,$
which is ample. Hence, $(X,\Delta)$ is a klt log Fano pair. This implies $X$ is of Fano type and, consequently, a Mori dream space \cite[Corollary~1.3.1]{BCHM10}.
\end{proof}

\section{The complexity of tropical mutation surface pairs}
\label{sec:cmp}

 In this section, we compute the complexity of the tropical mutation pair $(X_f(\mathcal{P}), B)$ and prove that it is a cluster type pair. The key geometric observation is that the roots of \(f\) give distinguished
interior curves on \(U_f\). After compactification, these curves meet either
the source or the sink of the \(G_m\)-action. Resolving the interior Du Val
singularities and contracting these curves recovers a toric model. This
birational factorization is the mechanism behind both the cluster-type
property and the complexity formula.
\begin{definition}{\rm (cf.~\cite[Definition 2.14]{EFM24})}\label{def:complexity}
\emph{Let $X$ be a projective $\mathbb Q$-factorial variety with Picard rank
$\rho(X)$, and let $B=\sum B_i$ be a Weil divisor such that
$(X,B)$ is log canonical and $-(K_X+B)$ is nef. The
\emph{complexity} of the pair $(X,B)$ is
$ c(X,B):=\dim X+\rho(X)-|B|, $
where $|B|$ denotes the sum of the coefficients of $B$.}
\end{definition}

We describe two elementary configurations of boundary points which will be used below.

\begin{definition}
\label{def:nodal-collinear}
\emph{Let $(X,B)$ be a projective log Calabi--Yau surface pair.
\begin{enumerate}
  \item A point $p\in X$ is called \emph{nodal} if it lies in the intersection
  of two irreducible components of $B$.
  \item A collection of non-nodal points $p_1,\dots,p_s$ is called
  \emph{collinear} if all $p_i$ lie on the same irreducible component of
  $B$. We allow repetitions when keeping track of infinitely near 
  centers.
\end{enumerate}}
\end{definition}

The following lemma explains why collinear blow-ups increase the complexity,
whereas nodal toric modifications preserve it.
\begin{lemma}
\label{lem:blow-up-boundary}
Let $(T,B_T)$ be a projective toric surface with a reduced toric boundary. Let $\pi\colon \widetilde T\longrightarrow T$ be the blow-up of $s$ collinear boundary points, and let $\widetilde B$ be the strict transform of $B_T$. Then:
\begin{enumerate}
    \item $(\widetilde T,\widetilde B)$ is a log Calabi--Yau pair with $c(\widetilde T,\widetilde B)=s$;
    \item the one-dimensional subtorus fixing the boundary component containing the points $p_i$ lifts to $\widetilde T$;
    \item toric weighted blow-ups at nodal boundary points preserve the complexity, provided the exceptional divisor is included in the boundary.
\end{enumerate}
\end{lemma}

\begin{proof}
Because $(T,B_T)$ is toric, $K_T+B_T\sim 0$. The ordinary blow-ups at the smooth locus of $B_T$ are crepant, so $K_{\widetilde T}+\widetilde B=\pi^*(K_T+B_T) \sim 0$, making $(\widetilde T,\widetilde B)$ log Calabi--Yau. Since blowing up $s$ points increases the Picard rank by $s$ while leaving the number of boundary components unchanged, the complexity is $c(\widetilde T,\widetilde B) = 2+(\rho(T)+s)-|B_T| = s$. 

Let $D\subset B_T$ contain the points $p_i$. The dense orbit of $D$ is fixed by a one-dimensional subtorus; since each $p_i$ lies on this orbit, the action canonically lifts to $\widetilde T$. Finally, a toric weighted blow-up at a node extracts exactly one new boundary divisor, increasing both $\rho$ and $|B|$ by $1$. The complexity $2+\rho-|B|$ is thus invariant, and the log Calabi--Yau property is preserved.
\end{proof}

We now describe the distinguished interior curves on the affine tropical mutation surface. This allows us to compute the complexity from it.

\begin{definition}\label{def:source-sink}
\emph{For each root $\alpha_r$ of $f$, the affine surface $U_f=\{x_1x_2=f(y)\}$ contains two \emph{distinguished interior curves} defined as $C_{2r-1}=\{x_1=0,\ y=\alpha_r\}$ and $C_{2r}=\{x_2=0,\ y=\alpha_r\}$.}

\emph{For a polytope $\mathcal P\subset \mathcal M_s$, define $\mathcal F_{\max}$ and $\mathcal F_{\min}$ as the maximum and minimum faces the height function $h$ on $\mathcal P$. The corresponding boundary strata of $X_f(\mathcal P)$ are called the \emph{sink} and \emph{source}, respectively.}
\end{definition}

Since $h$ is linear on the chosen chart, these loci are faces of $\mathcal P$. Let $h_{\max}:=\max_{\mathcal P}h$ and define $r_{\max} := \min\{h_{\max}-h(m)>0 \mid m\in\mathcal P\cap M\}$. We define $h_{\min}$ and $r_{\min}$ analogously. 

\begin{proposition}
\label{prop:source-sink-transversality}
Let $X_f(\mathcal P)$ be the tropical mutation surface with boundary $B$. The closure $\overline{C}_{2r}$ meets the sink, and $\overline{C}_{2r-1}$ meets the source. Furthermore:
\begin{enumerate}
    \item If the sink (resp.\ source) is a boundary divisor $D_{\max}$ (resp.\ $D_{\min}$), and assuming the relevant adapted coordinate restricts nontrivially, $\overline C_{2r}$ meets $D_{\max}$ with contact order $r_{\max}$ (resp.\ $\overline C_{2r-1}$ meets $D_{\min}$ with contact order $r_{\min}$). This intersection is transverse when the minimal difference is $1$.
    \item If the sink or source is a nodal stratum, a toric weighted blow-up along the primitive contact vector extracts an exceptional divisor $E_{v_0}$. The strict transforms of the distinguished curves intersect $E_{v_0}$ away from the old strata, and transversally if and only if their initial contact vector was primitive. 
    \item A single weighted blow-up separates all distinguished curves approaching a node with the same primitive contact direction, meeting $E_{v_0}$ at distinct non-nodal points.
\end{enumerate}
\end{proposition}

\begin{proof}
We first prove (1). Let $i=2r$ be even. By definition, $C_{2r}$ is the locus where $x_2=0$. 
Using the adapted basis $\{\theta_0, \dots, \theta_m\} \subset \Gamma(U_f, \mathcal{P})$, 
we embed $C_{2r}$ into the projective space $\mathbb{P}^m$, 
where the coordinate $\theta_0$ corresponds to the adapted basis element for the origin of $\mathcal{P}$. In the affine chart $\theta_0=1$, we have
\begin{equation}
C_{2r}(t) = \bigl(1, \theta_1(t, 0, \alpha_r), \dots, \theta_m(t, 0, \alpha_r)\bigr),\quad t\in\mathbb{A}^1.
\end{equation}

Let $k$ be the maximal $y$-coordinate of a lattice point of $\mathcal{P}$. Passing to homogeneous coordinates $[\tau_0:\tau_1]$ on $\mathbb{P}^1$ by setting $t = \tau_0/\tau_1$ and multiplying by $\tau_1^k$, we obtain
\begin{equation}\label{eq:22}
C_{2r}([\tau_0:\tau_1]) = \left[\tau_1^k : \tau_1^k\theta_1\left(\frac{\tau_0}{\tau_1}, 0, \alpha_r\right) : \dots : \tau_1^k\theta_m\left(\frac{\tau_0}{\tau_1}, 0, \alpha_r\right)\right].
\end{equation}
Taking the limit as $[\tau_0:\tau_1] \to [1:0]$, we obtain an intersection point on the tropical mutation boundary $B$. The only non-vanishing terms in \eqref{eq:22} correspond to the adapted basis elements possessing the maximal $y$-coordinate $k$. Hence, $C_{2r}$ intersects the sink. 

Furthermore, when the sink is a divisor, Equation \eqref{eq:22} demonstrates that as the root $\alpha_r$ varies, the curves $C_{2r}$ intersect the sink at collinear points.

If the sink is a divisor $D_{\max}$, setting local parameter $u=1/t$ at infinity and normalizing by $t^{h_{\max}}$, the coordinate for $m=(a,b)$ restricts to $u^{h_{\max}-b}$. The intersection is cut out by transverse coordinates ($b < h_{\max}$), so the minimal vanishing order is exactly $r_{\max}$, yielding (1).

For (2) and (3), suppose the sink is a nodal boundary stratum (a vertex of $\mathcal{P}$). Analytically locally, the pair $(X_f(\mathcal{P}), B)$ is modeled by an affine toric surface with local coordinates $(z_1, z_2)$. These coordinates correspond to a choice of basis characters $m_1, m_2 \in M$ representing the two adjacent boundary components intersecting at the node.

We parameterize the curve $C_{2r}$ by $x_1 = t$ and $y = \alpha_r$. To study the asymptotic approach to the node, we introduce the local parameter $u = 1/t$. As $u \to 0$, the curve's trajectory determines a one-parameter subgroup, or cocharacter, $\nu \in N$. Because the global coordinate $y$ is fixed to the constant $\alpha_r$ along the curve, its pairing with this cocharacter must be identically zero: $\langle y, \nu \rangle = 0$. This forces $\nu$ to be the horizontal cocharacter.

The contact vector $v = (p, q)$ records exactly how the curve intersects the local toric boundary. The integers $p$ and $q$ are derived by taking the inner product of the adjacent character vectors $m_1$ and $m_2$ with our approach cocharacter $\nu$. Specifically, as $u \to 0$, the restrictions of the local toric coordinates to the curve are dominated by:

$$z_1(u)=c_1u^{\langle m_1,\nu\rangle}+\cdots\sim c_1u^p,
\qquad
z_2(u)=c_2u^{\langle m_2,\nu\rangle}+\cdots\sim c_2u^q,
\qquad c_1,c_2\in\mathbb K^\ast .$$

Thus, $p = \langle m_1, \nu \rangle$ and $q = \langle m_2, \nu \rangle$. We factor this contact vector into $v = d v_0$, where $v_0 = (p_0, q_0)$ is a primitive integer vector and $d = \gcd(p, q)$ is the contact multiplicity.

Performing a toric weighted blow-up along the primitive ray $v_0$ extracts an exceptional divisor $E_{v_0}$. By construction, the strict transform of the curve meets the dense torus orbit of $E_{v_0}$ with a contact order equal to the multiplicity $d$. Therefore, the intersection is transverse if and only if $d = 1$.

To prove (3), that this single blow-up separates all distinguished curves approaching with the same direction, we examine the regular coordinate on the open orbit of $E_{v_0}$. This coordinate is given by the local monomial ratio $z_1^{q_0} / z_2^{p_0}$. In the character lattice, this corresponds to the character $m_E = q_0 m_1 - p_0 m_2$. 

Taking its inner product with the approach cocharacter $\nu$ yields:
$$ \langle m_E, \nu \rangle = q_0 \langle m_1, \nu \rangle - p_0 \langle m_2, \nu \rangle = q_0 p - p_0 q = d(q_0 p_0 - p_0 q_0) = 0. $$
Because the character $m_E$ annihilates the horizontal cocharacter $\nu$, it must be a scalar multiple of the vertical character $y$. Hence, there is some constant $\lambda \in \mathbb{K}^*$ such that $z_1^{q_0} / z_2^{p_0} = \lambda y$.

Evaluating this ratio along the strict transform of our curve (where $y = \alpha_r$), we find that the intersection point on $E_{v_0}$ occurs exactly at the coordinate $\lambda \alpha_r$. Since the roots $\alpha_r$ are distinct by definition, curves associated with different roots intersect the exceptional divisor at distinct points.
\end{proof}

\begin{theorem}
\label{thm:toric-reduction}
Let $X_f(\mathcal{P})$ be a tropical mutation surface where the interior singularity over the root $\alpha_r$ is of type $A_{\beta_r-1}$. Then:
\begin{enumerate}
    \item There exists a crepant resolution $\psi \colon (\widetilde{X_f(\mathcal{P})},\widetilde{B}) \longrightarrow (X_f(\mathcal{P}),B)$.
    \item If the sink is a divisor, $\widetilde{X_{f}(\mathcal{P})}$ admits an $s$-collinear blow-up $\pi \colon \widetilde{X_{f}(\mathcal{P})} \longrightarrow T$ to a toric pair $(T,B_T)$.
    \item If the sink is a node, there exists a nodal weighted blow-up $\phi \colon T' \longrightarrow \widetilde{X_{f}(\mathcal{P})}$, where $T'$ is an $s$-collinear blow-up of a toric variety. 
\end{enumerate}
\end{theorem}
\begin{proof}
The local singularities over the roots of $f$ are Du Val singularities of
type $A_{\beta_r-1}$. Hence they admit minimal crepant resolutions. Over the point $(0,0,\alpha_r)$, the minimal resolution produces a chain $E_{r,1}+\cdots+E_{r,\beta_r-1}$ of $\beta_r-1$ rational curves of self-intersection (-2). This proves
(1).

We now consider the strict transforms of the distinguished interior curves.
For each root \(\alpha_r\), the curves $C_{2r-1}={x_1=0,\ y=\alpha_r},
\qquad
C_{2r}={x_2=0,\ y=\alpha_r}$
are isomorphic to \(\mathbb A^1\) inside \(U_f\): on \(C_{2r-1}\) the
coordinate \(x_2\) is free, while on \(C_{2r}\) the coordinate \(x_1\) is
free. Hence their closures in \(X_f(P)\) are rational curves.

Let $\eta:(Y,B_Y)\longrightarrow (X_f(P),B)$ be the composition of the minimal crepant resolution of the interior Du Val
singularities and, when the relevant source or sink is a nodal boundary
stratum, the toroidal weighted blow-up from
Proposition~\ref{prop:source-sink-transversality}. By that proposition, the
strict transforms of the distinguished curves meet the reduced boundary
\(B_Y\) transversely at smooth non-nodal points. In particular, each such
strict transform \(\widetilde C\) is a rational curve meeting \(B_Y\) in
exactly one point.

Since \(\eta\) is crepant for the log Calabi--Yau pair, we have $K_Y+B_Y=\eta^*(K_{X_f(P)}+B).$
Therefore $K_Y\cdot \widetilde C=-B_Y\cdot \widetilde C=-1.$
By adjunction, $(K_Y+\widetilde C)\cdot \widetilde C=-2,$
and hence \(\widetilde C^2=-1\). Thus the strict transforms of the
distinguished interior curves are \((-1)\)-curves.

For a root \(\alpha_r\) of multiplicity \(\beta_r\), the exceptional
configuration over \((0,0,\alpha_r)\) has the form
$$\widetilde C_{2r-1}
;-;
E_{r,1}
;-;\cdots;-;
E_{r,\beta_r-1}
;-;
\widetilde C_{2r},$$
where the \(E_{r,i}\) are \((-2)\)-curves and the two end curves
\(\widetilde C_{2r-1}\) and \(\widetilde C_{2r}\) are \((-1)\)-curves.
Choose the end curve meeting the chosen source or sink boundary component.
For definiteness, suppose we use the sink, so this curve is
\(\widetilde C_{2r}\). Contracting \(\widetilde C_{2r}\) makes the adjacent
curve \(E_{r,\beta_r-1}\) into a \((-1\))-curve. Contracting it then makes
the next component into a \((-1)\)-curve, and continuing in this way removes
the whole chain
$$[
\widetilde C_{2r}
+E_{r,\beta_r-1}
+\cdots+
E_{r,1}
]$$
by exactly \(\beta_r\) successive contractions. The remaining curve
\(\widetilde C_{2r-1}\) becomes the closure of the torus curve
\({y=\alpha_r}\) in the resulting surface. The source case is identical,
with the roles of \(C_{2r-1}\) and \(C_{2r}\) interchanged.

Performing this construction for all roots gives $\sum_r \beta_r=s$
contractions in total. Denote the resulting pair by \((T,B_T)\). Each
contracted curve meets the boundary transversely at one smooth point and is
otherwise contained in the interior. Hence the inverse operation is precisely
an \(s\)-fold collinear blow-up along the corresponding boundary component,
with multiplicity \(\beta_r\) over the point \(y=\alpha_r\).

After these contractions, the open complement is the torus: $T\setminus B_T \simeq \mathbb G_m^2.$
Moreover, the boundary cycle is obtained from the chart fan by undoing the
single shear. Thus \((T,B_T)\) is the corresponding toric surface pair, and
\(Y\to T\) is the contraction whose inverse is the (s)-collinear blow-up.
If the source or sink was already a boundary divisor, then \(Y=\widetilde
X_f(P)\), proving (2).

If the source or sink is a nodal stratum, then the preliminary toroidal
weighted blow-up extracts the boundary divisor described in
Proposition~\ref{prop:source-sink-transversality}. Applying the same
contraction argument on the resulting pair \(Y\) shows that \(Y\) is an
\(s\)-collinear blow-up of a toric surface pair. Equivalently, the map
\(Y\to \widetilde X_f(P)\) is the required nodal weighted blow-up, proving
((3)).
\end{proof}

\begin{definition}
\label{def:toric-model}
Let $X_f(\mathcal{P})$ be a shearing tropical mutation variety. A toric variety $T(\mathcal{P})$ with a crepant birational map of pairs
$$ \varphi \colon X_f(\mathcal{P}) \dashrightarrow T(\mathcal{P}) $$
is called a \emph{toric model} of $X_f(\mathcal{P})$ if it can be obtained as a composition of the following operations:
\begin{enumerate}
    \item $\psi^{-1}$: resolution of interior canonical singularities;
    \item $\phi^{-1}$: a weighted blow-up at a sink if the sink is a nodal point;
    \item $\pi$: contraction of $s$ exceptional curves on the fixed boundary.
\end{enumerate}
\end{definition}

\begin{definition}[cf.~{\cite[Definition 2.23]{enwright2025complexityvarietiesclustertype}}]
\label{def:cluster-type}
\emph{A log Calabi–Yau pair $(X,B)$ is of \emph{cluster type} if there exists a toric log Calabi–Yau pair $(T,B_{T})$ and a crepant birational map $\varphi\colon (T,B_{T})\dashrightarrow(X,B)$ that extracts only log canonical places of $(X,B)$. A variety $X$ is of \emph{cluster type} if it admits a log Calabi–Yau pair $(X,B)$ of cluster type.}
\end{definition}

Cluster type pairs generalize toric pairs by sharing common features, such as log rationality and constructibility (cf.~{\cite[Theorem~1.2]{ji2024toricityfamiliesfanovarieties}}), yet cluster type pairs may have complexity strictly greater than $0$. 

\begin{corollary}
\label{cor:complexity}
Let $\mathcal{P} \subset \mathcal{M}_s$ be a polytope in the rank-two shearing polyptych lattice for $s>0$, and let $(X_f(\mathcal{P}),B)$ be a tropical mutation pair associated to a degree $s$ polynomial $f\in\mathbb{K}[y]$. Then:
\begin{enumerate}
  \item $B$ supports an effective ample divisor;
  \item $\mathbb{G}_m \leq \operatorname{Aut}(X_f(\mathcal{P}),B)$;  
  \item $B$ has at least two irreducible components;
  \item $(X_f(\mathcal{P}), B)$ is a cluster type pair; and
  \item the complexity of $(X_f(\mathcal{P}),B)$ is exactly $\gamma$, the number of distinct roots of $f$.
\end{enumerate}
\end{corollary}

\begin{proof}
Statements (1) and (2) follow from Theorem~\ref{Ample} and Proposition~\ref{prop:iso_and_aut}. 

For (3), the surface $X_f(\mathcal{P})$ is obtained as an $s$-collinear blow-up of a toric surface, possibly after a single additional blow-up at a nodal point. Since the toric boundary of the surface has at least three irreducible components, the resulting tropical mutation surface pair $(X_f(\mathcal{P}), B)$ has at least two boundary components.

For (4), in the proof of Theorem~\ref{thm:toric-reduction}, we constructed a crepant birational map to the toric model $(X_f(\mathcal{P}),B)\dashrightarrow (T,B_T)$, which only extracts divisors of discrepancy zero (the Du Val resolutions and toroidal nodal boundary subdivisions). Since it only extracts log canonical places, $(X_f(\mathcal{P}),B)$ is a cluster type pair.  

For (5), we track the complexity through the birational factorization:
$$ (T,B_T) \xleftarrow{\pi} (T',B'_T) \xrightarrow{\phi} (\widetilde{X_f(\mathcal{P})},\widetilde{B}) \xrightarrow{\psi} (X_f(\mathcal{P}),B). $$
The map $\pi$ is an $s$-collinear blow-up from a toric pair, so the complexity of $(T',B_T')$ is $s$. The nodal weighted blow-up $\phi$ extracts a boundary divisor, which increases both the Picard rank $\rho$ and the number of boundary components $|B|$ by $1$, preserving the complexity at $s$. Finally, $\psi$ blows down the interior $A_{\beta_i-1}$ chains, decreasing the Picard rank by exactly $\sum_{i=1}^{\gamma}(\beta_i-1) = s - \gamma$, without affecting the boundary. Therefore, the resulting complexity of $(X_f(\mathcal{P}),B)$ is exactly $s - (s - \gamma) = \gamma$.
\end{proof}

\section{Cluster type surfaces as tropical mutation surfaces}
By Corollary~\ref{cor:complexity}, every tropical mutation surface pair associated with a shearing polyptych lattice is a log Calabi–Yau pair whose boundary supports an ample divisor. We now prove the converse: log Calabi–Yau surface pairs with these properties are precisely tropical mutation surface pairs. 

By standard intersection theory (cf.~\cite[Corollary~10.1]{Ful93}), we have the following:
\begin{proposition}\label{Pic}
Let $\mathcal{P}$ be a polytope in the shearing polyptych lattice $\mathcal{M}_s$ defined by facets $\mathcal{F}_1,\dots, \mathcal{F}_n$, and let $D_1,\dots, D_n$ be the corresponding boundary divisors of the tropical mutation surface $X_f(\mathcal{P})$. Let $P_{\alpha}$ be a chart image of $\mathcal{P}$, and let $T(P_\alpha)$ be the associated toric surface.

Then, the toric degeneration $X_{f}(\mathcal{P}) \rightsquigarrow T(P_{\alpha})$ induces an isomorphism of the Néron-Severi groups:
$$ \Phi: N^1(X_{f}(\mathcal{P})) \longrightarrow N^1(T(P_{\alpha})) $$
that preserves the intersection form on the boundary divisors:
$$ \left( \sum_{i=1}^n a_i D_i \right) \cdot \left( \sum_{i=1}^n b_i D_i \right)_{X_{f}(\mathcal{P})} = \Phi\left(\sum_{i=1}^n a_i D_i\right) \cdot \Phi\left(\sum_{i=1}^n b_i D_i\right)_{T(P_{\alpha})}. $$
\end{proposition}

Thus, the intersection-theoretic data of the tropical mutation boundary is fully determined by the toric degenerations of its chart images. 
\begin{proposition}\label{prop:toricmodel}
Let $U_f$ be an affine tropical mutation variety, $X_f(\mathcal{P})$ be a tropical mutation surface, and $m_k$ be a vertex of $\mathcal{P}$ corresponding to a nodal boundary stratum of the mutation boundary $B$. Then:
\begin{enumerate}
\item The toric weighted nodal blow-up of $X_f(\mathcal{P})$ at the node corresponding to $m_k$, achieved combinatorially by inserting the primitive horizontal cocharacter ray $\nu_0 = (1,0)$ into the fan of the chart image, extracts an exceptional divisor $E_{\nu_0}$ that intersects the strict transforms of the distinguished interior curves transversally at distinct non-nodal points, yielding a tropical mutation surface compactifying $U_f$; and
\item the toric model $T(\mathcal{P})$ is uniquely determined by the ray structure of the chart images of $\mathcal{P}$.  
\end{enumerate}
\end{proposition}

\begin{proof}
For (1), since the nodal operation leaves the interior variety $U_f$ unaffected, we construct a corresponding modified polytope $\mathcal{P}'$ that yields the desired compactification. By local toric considerations, the neighborhood of the node $m_k$ is locally isomorphic to an affine toric surface. 

We define $\mathcal{P}'$ by inserting the primitive horizontal cocharacter ray $\nu_0 = (1,0)$ in the fan of the chart image $P_1$ (and its appropriate mutation image in $P_2$). Geometrically, this ray insertion corresponds exactly to the toric weighted blow-up at the node. If the node is a sink or a source stratum, Proposition~\ref{prop:source-sink-transversality} ensures that the distinguished interior $\mathbb{A}^1$-curves ($C_{2r}$ or $C_{2r-1}$, respectively) approach this node along this precise horizontal direction $\nu$. 

By Proposition~\ref{prop:source-sink-transversality}~(2) and (3), the strict transforms of these curves meet the extracted exceptional divisor $E_{\nu_0}$ away from the old boundary strata. Because the roots $\alpha_r$ of the mutation polynomial $f(y)$ are distinct, evaluating the stable coordinate $z_1^{q_0}/z_2^{p_0} = \lambda y$ along the boundary shows that these curves separate completely, intersecting $E_{\nu_0}$ transversally at distinct smooth points. The resulting variety $X_f(\mathcal{P}')$ is a well-defined tropical mutation surface that smoothly compactifies $U_f$ along these curves.

For (2), let $\widetilde{X_f(\mathcal{P})}$ be the resolution of the interior canonical singularities. If $\widetilde{X_f(\mathcal{P})}$ is an $s$-collinear blow-up along a divisor $D_k$, the self-intersection sequence of the surface boundary is $(D_1^2, \dots, D_k^2, \dots, D_n^2)$, while its toric model has $(D_1^2, \dots, D_k^2+s, \dots, D_n^2)$. By global intersection theory on tropical mutation surfaces, the charts  determine the boundary intersection numbers, which in turn uniquely dictate the toric ray structure of $T(\mathcal{P})$. If $\widetilde{X_f(\mathcal{P})}$ is not initially $s$-collinear, performing the weighted nodal blow-ups described in (1) reduces the configuration to the $s$-collinear case, completing the proof.
\end{proof}
\begin{proposition}
\label{thm:characterization}
Let $(T,B_T)$ be a projective toric surface pair, and let
$(D\subset B_T)$ be an irreducible boundary component. Fix a coordinate
$(y)$ on the dense orbit $(D^\circ\simeq \mathbb G_m)$. Let
$f(y)=\prod_{k=1}^{\gamma}(y-\alpha_k)^{\beta_k}\in \mathbb K[y],\,
\alpha_k\in \mathbb K^\ast,
\sum_{k=1}^{\gamma}\beta_k=s.$
Let $\eta\colon Y\longrightarrow T$
be the iterated blow-up of the $s$ collinear point on $D^\circ$
supported at the points $y=\alpha_k$, with multiplicities $\beta_k$, and
let $B_Y$ be the strict transform of $B_T$.

Then the log Calabi--Yau pair obtained from $Y,B_Y$ by contracting the
interior $A_{\beta_k-1}$-chains of rational curves over the points $y=\alpha_k$, and by
performing any sequence of toroidal blow-ups or blow-downs at nodal boundary
strata, is a shearing tropical mutation pair $(X_f(P),B(P))$
for some shearing polytope $\mathcal P\subset \mathcal M_s$.

Conversely, every shearing tropical mutation pair is obtained from a toric
surface pair by a collinear boundary blow-up of this form, followed by the
above crepant contractions and toroidal modifications.
\end{proposition}

\begin{proof}
We first analyze the boundary combinatorics. Let $\Sigma=\{v_1,\ldots,v_n\}$ be the cyclically ordered rays of the fan of $T$, and let $D_i=D_{v_i}$ be the corresponding toric boundary divisors. By applying a lattice automorphism, we may assume $v_1=e_2=(0,1)$.

A single shear of weight $s$ is defined by a piecewise-linear transformation. Its linear action on the modified side of the fan is given by the matrix
\[
    A=(\mu^T)^{-1} =
    \begin{pmatrix}
    1 & 0\\
    -s & 1
    \end{pmatrix}.
\]
The two toric charts of the associated shearing polyptych lattice are constructed by applying $A$ to a consecutive subset of rays while leaving the rest fixed. Specifically, for a chosen block of rays $v_l,\ldots,v_j$, the two chart fans are defined by the rays
\[
    \Sigma_1= \{A v_1,\ldots,A v_j,\; A v_{j+1},\; v_{j+1},\ldots,v_n\}\qquad\text{and}
\qquad    \Sigma_2= \{A v_1,\ldots,A v_l,\; v_l,\ldots,v_n\}.
\]
Because $A\in {\rm GL}_2(\mathbb Z)$, it preserves determinants. Consequently, all local toric intersection numbers remain unchanged away from the boundary component $D_1$ and the interface where the ray is subdivided.

We now compute the change in self-intersection at $D_1$. Let $v_2$ and $v_n$ be the rays adjacent to $v_1$, and define the determinants
\[
    a=\det(v_1,v_2),\qquad c=\det(v_n,v_1),\qquad b=\det(v_2,v_n).
\]
By the standard toric self-intersection formula, $D_1^2=-\frac{b}{ac}.$ In the sheared chart, $v_2$ is replaced by $Av_2$, while $v_1$ and $v_n$ remain fixed. Since $(A-I)v_2=-s(v_2)_1 e_2$ and $v_1=e_2$, we have
\[
    \det(Av_2,v_n)=\det(v_2,v_n)-sac=b-sac.
\]
Therefore, the corresponding boundary component has a new self-intersection of $-\frac{b-sac}{ac}=D_1^2-s.$
This matches the change in self-intersection caused by blowing up $s$ points, counted with multiplicity, on the smooth locus of $D_1$.

At the locus of the subdivision, the ray $v_{j+1}$ is replaced by the two rays $A v_{j+1}$ and $v_{j+1}$. If $D^-$ and $D^+$ denote the resulting boundary divisors, the total boundary class replacing $D_{j+1}$ is $D^-+D^+$. Because the adjacent cones are obtained from the original cones via the unimodular transformation $A$ on one side and the identity on the other, the toric wall relation dictates $(D^-+D^+)^2=D_{j+1}^2 .$
Thus, aside from the expected decrease $D_1^2\mapsto D_1^2-s$, the sheared boundary cycle retains the exact intersection data of a boundary obtained from $(T,B_T)$ by an $s$-fold collinear blow-up along $D_1$.

On the open torus chart adjacent to $D_1$, the elementary modification determined by the length-$s$ cluster with support $\{\alpha_k\}$ is encoded by $x_1x_2=f(y)$. Equivalently, the complement of the strict transform of the toric boundary is
\[
    U_f=\operatorname{Spec} \mathbb K[x_1,x_2,y^{\pm1}]/\langle x_1x_2-f(y)\rangle.
\]
If a root $\alpha_k$ has multiplicity $\beta_k$, $U_f$ contains an interior Du Val singularity of type $A_{\beta_k-1}$ over $(x_1,x_2,y)=(0,0,\alpha_k)$. The minimal resolution introduces the corresponding interior $A_{\beta_k-1}$-chain. Contracting these chains recovers the normal affine surface $U_f$. This process is crepant for the log Calabi--Yau pair because the chains are disjoint from the reduced boundary.
\end{proof}

We conclude with a birational characterization of shearing tropical mutation
surfaces.  The first step is to show that the hypotheses force the pair to
come from a toric boundary pair by a crepant birational map.
\begin{proposition}
\label{prop:Gm_quotient_cluster}
Let \((X,B)\) be an index-one normal projective \(\mathbb Q\)-factorial dlt
log Calabi--Yau surface pair with reduced boundary. Assume that \(B\) supports
an effective ample divisor and that
\(\mathbb G_m\leq \Aut(X,B)\) acts nontrivially. Then \((X,B)\) is of cluster
type.
\end{proposition}

\begin{proof}
Let $A=\sum_i a_iB_i, $ $ a_i>0,$
be an effective ample \(\mathbb Q\)-divisor supported on \(B\). For
\(0<\epsilon\ll 1\), set $\Delta:=B-\epsilon A.$
Since \((X,B)\) is dlt and \(B\) is reduced, the standard perturbation property
of dlt pairs implies that \((X,\Delta)\) is klt. Moreover $ -(K_X+\Delta)
=
-(K_X+B)+\epsilon A
\sim_{\mathbb Q}
\epsilon A$ is ample. Hence \(X\) is of Fano type. In particular, \(X\) is rational, since
a surface of Fano type is rational.

By Rosenlicht's theorem~\cite[Theorem~2]{algebraic-group}, the nontrivial
\(\mathbb G_m\)-action admits a rational quotient $q\colon X\dashrightarrow C$ with \(\dim C=1\). Since \(X\) is rational, L\"uroth's theorem implies that
the normalization of \(C\) is \(\mathbb P^1\).

Take a \(\mathbb G_m\)-equivariant birational model $r\colon Y\longrightarrow X$
which resolves the rational quotient map and is crepant for the log
Calabi--Yau pair. More explicitly, since \((X,B)\) is an index-one dlt surface
pair with reduced boundary, the interior singularities are canonical surface
singularities, while the boundary singularities are toroidal. Thus, after
resolving the interior crepantly and performing equivariant toroidal
subdivisions along the boundary, we may assume that \(Y\) is smooth, \(B_Y\)
is reduced, and $K_Y+B_Y=r^*(K_X+B).$
The quotient map is then a \(\mathbb G_m\)-equivariant morphism $f\colon Y\longrightarrow \mathbb P^1$ whose general fiber is the closure of a general \(\mathbb G_m\)-orbit.

Run the \(\mathbb G_m\)-equivariant relative surface MMP for \(Y\) over
\(\mathbb P^1\). The output is a relatively minimal smooth rational ruled
surface $\pi\colon S\longrightarrow \mathbb P^1,$ hence \(S\simeq \mathbb F_e\) for some \(e\geq 0\). Let \(B_S\) be the
pushforward of \(B_Y\). We claim that the induced birational map of log pairs $(S,B_S)\dashrightarrow (X,B)$ is crepant. Indeed, if $\varphi\colon Y_i\longrightarrow Y_{i+1}$
is one divisorial contraction in the relative MMP and \(E\) is the contracted
curve, write $K_{Y_i}+B_i
=
\varphi^*(K_{Y_{i+1}}+B_{i+1})+aE.$
Since \(K_{Y_i}+B_i\sim_{\mathbb Q}0\), intersecting with \(E\) gives $0=(K_{Y_i}+B_i)\cdot E=aE^2.$
As \(E^2<0\), we get \(a=0\). Thus every contraction is crepant, and therefore $K_S+B_S\sim_{\mathbb Q}0.$

The \(\mathbb G_m\)-action is trivial on the quotient base and nontrivial on
the general fiber of \(\pi\). Let \(F\) be a general fiber. Then $B_S\cdot F=-K_S\cdot F=2.$
Since \(\mathbb G_m\) is connected and preserves \(B_S\), it preserves every
irreducible component of \(B_S\). On the generic fiber, the nontrivial
\(\mathbb G_m\)-action on \(\mathbb P^1\) has exactly two fixed points.
Therefore the horizontal part of \(B_S\) consists of the two fixed sections,
say \(\Sigma_0+\Sigma_\infty\). Since $K_{\mathbb F_e}+\Sigma_0+\Sigma_\infty\sim_{\mathbb Q}-2F,$ the vertical part of \(B_S\) is linearly equivalent to \(2F\). Because \(B_S\)
is reduced and \(\mathbb G_m\)-invariant, this vertical part is the sum of two
distinct invariant fibers, say \(F_0+F_\infty\). Hence $B_S=\Sigma_0+\Sigma_\infty+F_0+F_\infty,$ which is the toric boundary of \(\mathbb F_e\). Thus \((X,B)\) admits a toric
crepant model, and hence \((X,B)\) is of cluster type.
\end{proof}

\begin{lemma}
\label{lem:equivariant_cluster_presentation}
Let $(X,B)$ be an index-one normal projective $\mathbb Q$-factorial dlt
log Calabi--Yau surface pair with reduced boundary. Assume that $B$ supports
an effective ample divisor and that
$\mathbb G_m\leq \Aut(X,B)$ acts nontrivially. Then there exists a toric
surface pair $(T,B_T)$ and a crepant birational presentation $(T,B_T)\dashrightarrow (X,B)$
such that, after separating the toroidal centers at nodal boundary strata, all
remaining non-nodal cluster centers lie on the smooth locus of a single
irreducible component of $B_T$.
\end{lemma}

\begin{proof}
By Proposition~\ref{prop:Gm_quotient_cluster}, the pair $(X,B)$ admits a
$\mathbb G_m$-equivariant toric crepant model $(S,B_S)=
(\mathbb F_e,\Sigma_0+\Sigma_\infty+F_0+F_\infty).$
Choose a $\mathbb G_m$-equivariant factorization of the crepant birational map $(S,B_S)\dashrightarrow (X,B)$
through smooth equivariant surface models. Since the factorization is crepant
for the reduced log Calabi--Yau boundaries, each center is an lc stratum of the
current boundary. Thus each center is either a nodal boundary stratum, giving a
toroidal modification, or a smooth point of a coefficient-one boundary
component. We separate the toroidal nodal centers from the remaining
non-nodal centers.

We now locate the non-nodal centers. Since the presentation is
$\mathbb G_m$-equivariant, every such center is fixed by the
$\mathbb G_m$-action. On the ruled surface $\mathbb F_e$, the action is
fiberwise and nontrivial on a general fiber. Hence the open stratum of an
invariant fiber contains no fixed point; the fixed points on each invariant
fiber lie on the two fixed sections. Therefore every non-nodal center is
supported on one of the two sections $\Sigma_0 \quad \text{or} \quad \Sigma_\infty .$

It remains to move all such centers to one section. Suppose that a non-nodal
center is supported on $\Sigma_\infty$, and let $F_p$ be the fiber through
its support $p$. The blow-up of $p$ is $\mathbb G_m$-equivariant, and the
strict transform of $F_p$ is a $\mathbb G_m$-invariant $(-1)$-curve.
Contracting it gives another $\mathbb G_m$-equivariant toric ruled pair. This
elementary transformation moves the corresponding center from
$\Sigma_\infty$ to the other fixed section. The same operation transports any
finite infinitely near point over $p$. Repeating this process for the
finitely many points supported on $\Sigma_\infty$, and keeping the toroidal
nodal modifications separated, we obtain a toric crepant model $(T,B_T)$ for
which all remaining non-nodal centers lie on the smooth locus of a single
boundary component of $B_T$.
\end{proof}

\begin{theorem}
\label{thm:cluster-tropical}
Let \((X,B)\) be an index-one normal projective \(\mathbb Q\)-factorial
log Calabi--Yau surface pair with reduced boundary. Assume that \(B\)
supports an effective ample divisor and that \(\mathbb G_m\leq \Aut(X,B)\)
acts nontrivially. Then \((X,B)\) is a tropical mutation surface pair.
\end{theorem}

\begin{proof}
Take a \(\mathbb G_m\)-equivariant dlt modification $\mu\colon (W,B_W)\longrightarrow (X,B).$
By the local classification of index-one log canonical surface pairs
\cite[Section~3.40]{Kol13}, the exceptional divisors of \(\mu\) are toroidal
boundary divisors lying over the zero-dimensional lc strata of \(B\). In
particular, $K_W+B_W=\mu^*(K_X+B),$
and \(\mu\) is an isomorphism over the generic point of every component of
\(B\).

We claim that \(B_W\) still supports an effective ample divisor. Let $A=\sum_i a_iB_i, \,a_i>0,$ be an effective ample \(\mathbb Q\)-divisor supported on \(B\). Let
\(E_1,\ldots,E_r\) be the \(\mu\)-exceptional curves. Since the exceptional
intersection matrix \((E_i\cdot E_j)\) is negative definite, there exists an
effective exceptional divisor $E=\sum_j m_jE_j,$ $ m_j>0,$
such that \(-E\cdot E_j>0\) for every \(j\). Equivalently, \(-E\) is
\(\mu\)-ample. Hence, by the Nakai--Moishezon criterion for surfaces, $A_W:=\mu^*A-\delta E$ is ample for \(0<\delta\ll 1\). Moreover \(A_W\) is supported on \(B_W\), and
all its coefficients are positive for sufficiently small \(\delta\), because
the exceptional divisors lie over strata contained in \(\operatorname{Supp}B\).
Thus \(B_W\) supports an effective ample \(\mathbb Q\)-divisor.

Therefore $(W,B_W)$ satisfies the hypotheses of
Lemma~\ref{lem:equivariant_cluster_presentation}. Applying the lemma, we obtain
a toric surface pair $(T,B_T)$ and a crepant birational map $(T,B_T)\dashrightarrow (W,B_W)$
such that, after separating the toroidal modifications at nodal boundary
strata, all non-nodal cluster centers lie on the smooth locus of one
irreducible component of $B_T$. Hence these centers form a length-$s$
collinear blow up on a toric boundary component. By
Proposition~\ref{thm:characterization}, the resulting pair, including the
toroidal nodal modifications, is a shearing tropical mutation pair. Thus $(W,B_W)\simeq (X_f(P'),B(P'))$
for some shearing polytope $\mathcal P'\subset M_s$.

Finally, $\mu\colon (W,B_W)\to (X,B)$ contracts only toroidal boundary
divisors over nodal strata. By Proposition~\ref{thm:characterization} again,
such contractions merely delete the corresponding rays in the chart fans and
preserve the mutation factor $f$. Hence $(X,B)\simeq (X_f(\mathcal P),B(\mathcal P))$
for some shearing polytope $\mathcal P\subset \mathcal M_s$, and therefore
$(X,B)$ is a shearing tropical mutation surface pair.
\end{proof}

\section{Toric degenerations of tropical mutation varieties}
\label{sec: glb}

In this section, we describe the global tropical mutation varieties $\mathcal X_f(\mathcal P)$ via divisorial fans. 

\begin{example}\label{pen}
Let $f(y)=\prod_{i=1}^{\gamma}(y-\alpha_i)^{\beta_i}$ for $\alpha_i\in \mathbb K^*$, $\beta_i>0$, and $\sum_i\beta_i=s$. Homogenizing over $\mathbb P^1_\tau$ yields $f_\tau(y)= \prod_{i=1}^{\gamma}(\tau_0y-\alpha_i\tau_1)^{\beta_i},$
which specializes to a nonzero constant at $[0:1]$ and $y^s$ at $[1:0]$.

Applying Theorem~\ref{toricdegene}, the detropicalized coordinate algebra on the rank-two chart is locally $A_{f,\tau} = \mathbb K[\tau,x_1,x_2,y^{\pm1}] / \langle x_1x_2-f_\tau(y)\rangle$. The homogeneous generators for $(a,b)\in kP_2\cap M$ are
\[
X_{a,b}t^k= \begin{cases} x_1^b y^a t^k, & b\geq 0,\\[4pt] x_2^{-b}y^a t^k, & b<0. \end{cases}
\]
Let $\mathcal X_f(\mathcal P)\to \mathbb P^1_\tau$ be the resulting flat projective family.

Over $[0:1]$, $x_1x_2=c \in \mathbb K^*$. Rescaling gives $x_2=x_1^{-1}$, so $X_{a,b}=x_1^b y^a$ for all $b$, yielding the special fiber
\[
T(P_2) = \operatorname{Proj}\mathbb K \big[ x_1^b y^a t^k \mid (a,b)\in kP_2\cap M,\ k\geq 0 \big].
\]

Over $[1:0]$, $x_1x_2=y^s$. For $b<0$, $X_{a,b}=(y^sx_1^{-1})^{-b}y^a = x_1^b y^{a-sb}$. Thus, exponent vectors transform via $\phi_s$, giving the fiber 
\[
T(P_1) = \operatorname{Proj}\mathbb K \big[ x_1^b y^a t^k \mid (a,b)\in kP_1\cap M,\ k\geq 0 \big].
\]
\end{example}

We relate this construction to the Ilten--Vollmert pencil via the combinatorial mutation of polar dual polytopes.

\begin{proposition}\label{existence}
Let $\mathcal P\subset \mathcal M_s$ with chart images $P_2=\phi_s(P_1)\subset M_{\mathbb R}$, where $0\in\operatorname{int}(P_1)$. Let $w=(1,0)\in M$ and $n_2\in N$ such that $\langle (a,b),n_2\rangle=b$. The segment $H_s^1:=\operatorname{Conv}\{0,sn_2\}\subset w^\perp\subset N_{\mathbb R}$ is an admissible factor for the polar dual $P_1^*$, and $\mu_{w,H_s^1}(P_1^*)=P_2^*$. Thus, $P_1^*$ and $P_2^*$ are related by a combinatorial mutation.
\end{proposition}

\begin{proof}
The support function of the segment $H_s^1$ is $h_{H_s^1}(a,b) = \min_{v\in H_s^1}\langle (a,b),v\rangle = \min(0,sb)$. The piecewise-linear map dual to $(w,H_s^1)$ is $u\mapsto u-h_{H_s^1}(u)w$, which explicitly matches the shearing transition $\phi_s(a,b)=(a-\min(0,sb),b)$.

By the polar-duality theorem for combinatorial mutations \cite[Proposition~2.20]{Akhtar_2015}, the convexity of $P_2=\phi_s(P_1)$ equates to the admissibility of $H_s^1$ for $P_1^*$. This theorem also yields $P_2 = \bigl(\mu_{w,H_s^1}(P_1^*)\bigr)^*$. Taking polar duals gives $\mu_{w,H_s^1}(P_1^*) = P_2^*$.
\end{proof}

Using the corresponding Ilten--Vollmert divisorial fan as a reference model, we now describe the global tropical mutation variety.
\begin{theorem}\label{coincidewith}
Let $\mathcal{P} \subset \mathcal{M}_s$ be a rank-two shearing polyptych lattice polytope with chart images $P_1, P_2 \subset M_{\mathbb{R}}$ satisfying $P_2 = \phi_s(P_1)$ and $0 \in \operatorname{int}(P_1)$. Given a polynomial 
\[ 
f(y) = \prod_{i=1}^{\gamma}(y-\alpha_i)^{\beta_i}, \qquad \alpha_i \in \mathbb{K}^*, \quad \beta_i \in \mathbb{Z}_{>0}, \quad \sum_{i=1}^{\gamma}\beta_i = s, 
\]
where the roots $\alpha_i$ are distinct, there exists a divisorial fan $\mathcal{S}_f$ on $Y := \mathbb{P}^1_y \times \mathbb{P}^1_\tau$ such that $X(\mathcal{S}_f) \cong \mathcal{X}_f(\mathcal{P})$ as varieties over $\mathbb{P}^1_\tau$.
\end{theorem}

\begin{proof}
We explicitly construct the family via its divisorial fan. Let $N' \cong \mathbb{Z}\langle n_2 \rangle$ and $M' := \operatorname{Hom}(N', \mathbb{Z})$. We define the intervals $H_1^1 := \operatorname{Conv}\{0, n_2\}$ and $H_s^1 := sH_1^1$, whose support functions on $M'$ are $h_{H_1^1}(b) = \min(0, b)$ and $h_{H_s^1}(b) = s \min(0, b)$.

Following the divisorial fan framework of Altmann and Hausen \cite{AH06} and the sign conventions for graph coefficients from Ilten and Vollmert \cite{Ilten-Vollmert}, we first establish the boundary divisors on the base $Y = \mathbb{P}^1_y \times \mathbb{P}^1_\tau$:
\[ 
Z_0 := \{u=0\} \times \mathbb{P}^1_\tau, \qquad Z_\infty := \{v=0\} \times \mathbb{P}^1_\tau. 
\]

Next, we construct the graph divisors to capture the central fibers of the family. For each distinct root $\alpha_i$, we define:
\[ 
\Gamma_i := \{\tau_0 u - \alpha_i \tau_1 v = 0\} \subset Y. 
\]
Because the deformation restricts over the roots of $f(y)$, we define the specialization of the $T$-variety along each $\Gamma_i$ to be the affine toric variety $T(P_i)$. Since $\sum \beta_i = s$ and $\Gamma_i \sim \Gamma$ for a general graph divisor $\Gamma$, the weighted sum $\sum \beta_i \Gamma_i$ is linearly equivalent to $s\Gamma$, which ensures the global geometry remains consistent with the shearing data.

We can now define the complete divisorial fan $\mathcal{S}_f$ using two maximal divisorial polyhedra:
\[ 
\mathcal{D}_f^+ := \Delta_0 \otimes Z_0 - \sum_{i=1}^{\gamma}\beta_i H_1^1 \otimes \Gamma_i, \qquad \mathcal{D}_f^- := -\sum_{i=1}^{\gamma}\beta_i H_1^1 \otimes \Gamma_i + \Delta_\infty \otimes Z_\infty. 
\]
The boundary coefficients \(\Delta_0\) and \(\Delta_\infty\) are taken from the
reference divisorial fan for the mutation datum \((w,H_s^1)\).  We normalize
\(\Delta_0\) so that the specialization along \(Z_0\) gives the cone over
\(P_1\).  The coefficient \(\Delta_\infty\) is then fixed by the same mutation
data; after specializing the graph-divisor term, it gives the cone over
\(\phi_s(P_1)=P_2\). By construction, these choices recover the boundary coefficients from the Ilten--Vollmert reference fan for the mutation datum $(w, H_s^1)$. Because the evaluated divisors $\mathcal{D}_f^\pm(b)$ are $\mathbb{Q}$-linearly equivalent to those in the reference construction, $\mathcal{S}_f$ inherits the necessary common-face and properness conditions, confirming it is a valid divisorial fan.

To establish the isomorphism $X(\mathcal{S}_f) \cong \mathcal{X}_f(\mathcal{P})$, we compute the section algebras over the open covers $U_1 = Y \setminus Z_\infty$ and $U_2 = Y \setminus Z_0$. On their intersection $U_{12} = \mathbb{K}^*_y \times \mathbb{P}^1_\tau$, both boundary divisors vanish. Here, $\sum \beta_i \Gamma_i$ is a principal divisor generated by $f_\tau(y) := \prod_{i=1}^{\gamma}(\tau_0 y - \alpha_i \tau_1)^{\beta_i}$. Letting $x_1, x_2$ represent generators in $M'$ for $b=1$ and $b=-1$ respectively, the coordinate algebra on an affine set $V \subset U_{12}$ is defined by the relation:
$x_1 x_2 = f_\tau(y). $

Extending over $U_1$, $Z_\infty$ is trivial and the allowable poles are restricted by $\Delta_0 \otimes Z_0$. Because we defined $\Delta_0$ using $P_1$, these restrictions correspond exactly to the inequalities of $P_1$. Eliminating $x_2$ via $x_2 = f_\tau(y)x_1^{-1}$ recovers the affine chart $X_{P_1} \times \mathbb{P}^1_\tau$. Symmetrically, extending over $U_2$ is governed by $\Delta_\infty \otimes Z_\infty$, restricting sections exactly to the cone conditions of $P_2$.

Finally, gluing these affine varieties over $U_{12}$ forces the coordinate transition $x_1 \mapsto f_\tau(y) x_2^{-1}$. This algebraic transition directly implements the piecewise-linear mutation map $\phi_s: P_1 \to P_2$. Matching the graded monomials in Definition~\ref{EHM} shows,
\[ 
X_{a,b}t^k= \begin{cases} x_1^b y^a t^k, & b\geq 0,\\[4pt] x_2^{-b}y^a t^k, & b<0, \end{cases} \qquad (a,b)\in kP_1\cap M, 
\]
the resulting $T$-variety is canonically isomorphic to the detropicalized algebra $\mathcal{X}_f(\mathcal{P})$ over $\mathbb{P}^1_\tau$.
\end{proof}
\section{Cox rings of tropical mutation surfaces}\label{sec:Cox}

In this section, we compute the Cox rings of projective tropical mutation surfaces associated to rank-two shearing polyptych lattices. 

\begin{lemma}\label{ufd}
For the affine shearing tropical mutation surface \(U_f\), the divisor class group is
\[
  \Cl(U_f) \cong \mathbb Z^{\gamma-1} \oplus \mathbb Z/\gcd(\beta_1,\ldots,\beta_\gamma).
\]
Moreover, the natural map \(\mathbb Z\langle D_1,\ldots,D_n, C_1,\ldots,C_{2\gamma} \rangle \twoheadrightarrow \Cl(X)\) is surjective.
\end{lemma}

\begin{proof}
Let \(U_f^\circ := U_f\setminus \bigcup_{k=1}^{2\gamma}C_k\), which is equivalently obtained from \(U_f\) by inverting \(f(y)\). Since \(x_1x_2=f(y)\), inverting \(f(y)\) makes both \(x_1\) and \(x_2\) invertible. Hence $\mathcal O(U_f^\circ) \cong \mathbb K[x_2^{\pm1},y^{\pm1}, (y-\alpha_1)^{-1},\ldots,(y-\alpha_\gamma)^{-1}],$
which is a UFD. Therefore \(\Cl(U_f^\circ)=0\). The localization exact sequence for divisor class groups gives
\[
  \mathcal O(U_f^\circ)^*/\mathcal O(U_f)^* \longrightarrow \bigoplus_{k=1}^{2\gamma}\mathbb Z\langle C_k\rangle \longrightarrow \Cl(U_f) \longrightarrow 0.
\]
Thus \(\Cl(U_f)\) is generated by the classes of the \(C_k\). The principal divisors \(\operatorname{div}(y-\alpha_k)=C_{2k-1}+C_{2k}\) give \(C_{2k}=-C_{2k-1}\) in \(\Cl(U_f)\). The principal divisor \(\operatorname{div}(x_2) = \sum_{k=1}^{\gamma}\beta_k C_{2k}\) therefore gives the single relation \(\sum_{k=1}^{\gamma}\beta_k C_{2k-1}=0\). Equivalently,
\[
  \Cl(U_f) \cong \left( \bigoplus_{k=1}^{\gamma}\mathbb Z\langle C_{2k-1}\rangle \right) \Bigg/ \left\langle \sum_{k=1}^{\gamma}\beta_k C_{2k-1} \right\rangle \cong \mathbb Z^{\gamma-1} \oplus \mathbb Z/\gcd(\beta_1,\ldots,\beta_\gamma).
\]
Finally, applying the localization exact sequence to \(U_f = X\setminus \bigcup_{i=1}^n D_i\) gives
\[
  \bigoplus_{i=1}^{n}\mathbb Z\langle D_i\rangle \longrightarrow \Cl(X) \longrightarrow \Cl(U_f) \longrightarrow 0.
\]
Since \(\Cl(U_f)\) is generated by the classes of the \(C_k\), the classes of \(D_1,\ldots,D_n,C_1,\ldots,C_{2\gamma}\) generate \(\Cl(X)\), proving the desired surjection.
\end{proof}

\begin{lemma}\label{ord}
Let \(\mathcal P\) be a polytope in the rank-two shearing polyptych lattice defined by tropical points \(p_i=(a_i,b_i,c_i)\) for \(i=1,\ldots,n\), and let \(X=X_f(\mathcal P)\) be the associated projective tropical mutation surface. Under the identification \(\mathbb K(X)\cong \mathbb K(x,y)\) where \(x=x_2\), the principal divisors of \(x\), \(y\), and \(y-\alpha_k\) are:
$$\begin{aligned}
\operatorname{div}(x) &= \sum_{i=1}^n a_iD_i + \sum_{k=1}^{\gamma}\beta_kC_{2k},\qquad
\operatorname{div}(y)= -\sum_{i=1}^n c_iD_i\qquad
\operatorname{div}(y-\alpha_k)= -\sum_{c_i>0}c_iD_i + C_{2k-1}+C_{2k}.
\end{aligned}$$
\end{lemma}

\begin{proof}
This is the valuation formula for boundary divisors of a rank-two shearing polyptych lattice, as in Proposition~\ref{prop:normal}~(2). Hence \(\operatorname{ord}_{D_i}(x)=a_i\) and \(\operatorname{ord}_{D_i}(y)=-c_i\), yielding the boundary contributions to \(\operatorname{div}(x)\) and \(\operatorname{div}(y)\). For \(y-\alpha_k\), the same valuation formula gives
\[
  \operatorname{ord}_{D_i}(y-\alpha_k) = \min\{\operatorname{ord}_{D_i}(y),0\} =
  \begin{cases}
    -c_i, & c_i>0,\\
    0, & c_i\leq 0.
  \end{cases}
\]
Thus the boundary contribution is \(-\sum_{c_i>0}c_iD_i\). Along the interior divisors, \(x=x_2\) vanishes to order \(\beta_k\) along \(C_{2k}\), and \(y-\alpha_k\) vanishes simply along both \(C_{2k-1}\) and \(C_{2k}\). Combining these gives the stated formulas.
\end{proof}
\begin{theorem}\label{thm:Cox_ring_presentation}
Let \(\mathcal P\) be a polytope in the rank-two shearing polyptych lattice defined by tropical points \(p_i=(a_i,b_i,c_i)\) for \(i=1,\ldots,n\), and let \(X=X_f(\mathcal P)\) be the associated projective tropical mutation surface. Let \(\mathbb K[w_1,\ldots,w_{n+2\gamma}]\) be the polynomial ring whose variables correspond to the boundary divisors, graded by the class group \(\Cl(X)\) via $\deg(w_i)=[D_i]\quad(1\le i\le n),
\qquad
\deg(w_{n+j})=[C_j]\quad(1\le j\le 2\gamma).$
Then the Cox ring of \(X\) is isomorphic to the quotient
\[
\operatorname{Cox}(X)
\cong
\frac{\mathbb K[w_1,\ldots,w_{n+2\gamma}]}
{\left\langle
w_{n+2k-1}w_{n+2k} - \prod_{c_i<0}w_i^{-c_i} + \alpha_k \prod_{c_i>0}w_i^{c_i}
\ \middle|\ k=1,\ldots,\gamma
\right\rangle}.
\]
Moreover, this Cox ring is a complete intersection.
\end{theorem}

\begin{proof}
The surface \(X\) is a normal projective complexity-one \(\mathbb G_m\)-variety. Let \(\mathcal S_f\) be the divisorial fan on the rational quotient \(\mathbb P^1 = \operatorname{Proj}\mathbb K[Y,Z]\) (with affine coordinate \(y=Y/Z\)) describing this \(\mathbb G_m\)-action. Under this description, the invariant prime divisors of \(X\) are exactly the boundary divisors \(D_1,\ldots,D_n\) and \(C_1,\ldots,C_{2\gamma}\).

We distinguish these divisors by their behavior under the action:
\begin{itemize}
    \item \emph{Horizontal divisors:} The divisors \(D_i\) with \(c_i=0\) correspond to extremal rays of the divisorial fan. Dynamically, these are the isolated fixed loci representing the source and sink of the \(\mathbb G_m\)-action (as defined in Definition~\ref{def:source-sink}). Consequently, there are at most two such horizontal divisors.
    \item \emph{Vertical divisors:} The divisors \(D_i\) with \(c_i>0\) lie over \(y=\infty\), the divisors \(D_i\) with \(c_i<0\) lie over \(y=0\), and the pairs \(C_{2k-1},C_{2k}\) lie over \(y=\alpha_k\). These correspond to extremal vertices of the special slices of \(\mathcal S_f\).
\end{itemize}

By the Cox-ring presentation theorem of Hausen--Süß \cite[Theorem~1.3]{HS10} for complexity-one \(T\)-varieties, \(\operatorname{Cox}(X)\) is generated by variables corresponding to these invariant prime divisors. We identify these generators with \(w_1, \ldots, w_{n+2\gamma}\). 

It remains to identify the defining ideal. For each special point \(p\in\mathbb P^1\), the Hausen--Süß presentation associates a monomial \(T^{\mu_p}\) formed by multiplying the variables of the vertical divisors over \(p\), weighted by their fiber multiplicities. By Lemma~\ref{ord}, the principal divisors of \(y\) and \(y-\alpha_k\) dictate that the fiber monomials over \(\infty\), \(0\), and \(\alpha_k\) are respectively:
\[
T^{\mu_\infty} = \prod_{c_i>0}w_i^{c_i},
\qquad
T^{\mu_0} = \prod_{c_i<0}w_i^{-c_i},
\qquad
T^{\mu_{\alpha_k}} = w_{n+2k-1}w_{n+2k}.
\]

The defining ideal of \(\operatorname{Cox}(X)\) is generated by the homogenizations of the linear relations among the special points on \(\mathbb P^1\). With homogeneous coordinates \([Y:Z]\), the points \(0\), \(\infty\), and \(\alpha_k\) are cut out by the linear forms \(Y\), \(Z\), and \(Y-\alpha_kZ\), which satisfy the identity $(Y-\alpha_kZ)-Y+\alpha_kZ=0.$
Substituting the corresponding fiber monomials \(T^{\mu_{\alpha_k}}\), \(T^{\mu_0}\), and \(T^{\mu_\infty}\) for these linear forms yields the generalized trinomial relations
\[
f_k := w_{n+2k-1}w_{n+2k} - \prod_{c_i<0}w_i^{-c_i} + \alpha_k \prod_{c_i>0}w_i^{c_i} = 0, \qquad k=1,\ldots,\gamma.
\]
These relations generate the Hausen--Süß ideal, completing the proof of the isomorphism.

To see that the Cox ring is a complete intersection, note that \(\operatorname{Cox}(X)\) is cut out in the polynomial ring \(S = \mathbb K[w_1,\ldots,w_{n+2\gamma}]\) by the \(\gamma\) equations \(f_k = 0\). For fixed values of the variables \(w_1,\ldots,w_n\), each equation \(w_{n+2k-1}w_{n+2k} = \prod_{c_i<0}w_i^{-c_i} - \alpha_k \prod_{c_i>0}w_i^{c_i}\) cuts out a hypersurface in the affine plane with coordinates \((w_{n+2k-1},w_{n+2k})\), hence has dimension one. Therefore, every fiber over \(\mathbb A^n\) (parameterized by \(w_1,\ldots,w_n\)) has dimension \(\gamma\), and \(\dim \operatorname{Cox}(X) = n+\gamma\). Since \(\dim S=n+2\gamma\), the ideal \((f_1,\ldots,f_\gamma)\) has height \(\gamma\). Thus, the \(f_k\) form a regular sequence, making \(\operatorname{Cox}(X)\) a complete intersection.
\end{proof}

\begin{corollary}\label{criteria}
Assume \(X=X_f(\mathcal P)\) is projective. Then \(X\) is toric precisely when one of the following equivalent conditions holds:
\begin{enumerate}
    \item \(\gamma=1\) (i.e., \(f\) has one distinct root), and either \(\prod_{c_i>0}w_i^{c_i}\) or \(\prod_{c_i<0}w_i^{-c_i}\) is a single variable. Equivalently, exactly one \(c_i = 1\) with all other \(c_j \leq 0\), or exactly one \(c_i = -1\) with all other \(c_j \geq 0\).
    \item \(\gamma=2\) (i.e., \(f\) has two distinct roots), and both \(\prod_{c_i>0}w_i^{c_i}\) and \(\prod_{c_i<0}w_i^{-c_i}\) are single variables. Equivalently, exactly one \(c_i = 1\), exactly one \(c_j = -1\), and all other \(c_\ell = 0\).
\end{enumerate}
\end{corollary}

\begin{proof}
Because \(X\) is projective, both fiber monomials \(\prod_{c_i>0}w_i^{c_i}\) and \(\prod_{c_i<0}w_i^{-c_i}\) are nonconstant; otherwise, \(y\) or \(y^{-1}\) would be a nonconstant regular function on \(X\), which is impossible. A normal projective variety with finitely generated class group and constant global units is toric if and only if its Cox ring is polynomial~\cite[Corollary~3.6]{Cox95}. 

By Theorem~\ref{thm:Cox_ring_presentation}, if \(\gamma=1\) and the fiber monomial over \(\infty\) (resp. \(0\)) is a single variable, the single defining relation is linear in that variable and can be eliminated, making the Cox ring polynomial. Conversely, if neither is a single variable, all partial derivatives of the relation vanish at the origin, creating a singularity and proving the ring is not polynomial. 

If \(\gamma=2\) and both fiber monomials are single variables (say \(w_p\) and \(w_q\)), the relations are independent linear equations since \(\alpha_1\neq\alpha_2\). We can eliminate \(w_p\) and \(w_q\), yielding a polynomial ring. If at least one is not a single variable, the Jacobian matrix at the origin has rank strictly less than \(2\), again indicating a singularity. Finally, if \(\gamma\geq 3\), the Jacobian matrix at the origin has rank at most \(2\) (coming from the linear terms of the fiber monomials), which is strictly less than the codimension \(\gamma\), so the total coordinate space is singular.
\end{proof}

In the setting of Corollary~\ref{criteria}, we will describe the resulting toric varieties. Let
$U_k := w_{n+2k-1}w_{n+2k}.$
On the Cox ring, we have the relation \(U_k = \prod_{c_i<0}w_i^{-c_i} - \alpha_k \prod_{c_i>0}w_i^{c_i}\). Here \(U_k\) is the Cox monomial corresponding to the two components of the vertical fiber over the root \(\alpha_k\). Since the relation is homogeneous,
\[
\deg(U_k) = \deg\left(\prod_{c_i>0}w_i^{c_i}\right) = \deg\left(\prod_{c_i<0}w_i^{-c_i}\right).
\]
Thus the split vertical fiber \(C_{2k-1}+C_{2k}\) has the same class as the horizontal fiber monomials. If, for example, \(\prod_{c_i>0}w_i^{c_i} = w_p\) is a single boundary variable, then the class of \(D_p\) can be replaced by the class of the split vertical fiber: $[D_p]=[C_{2k-1}]+[C_{2k}].$
In this case the relation is linear in \(w_p\), and the Cox coordinate \(w_p\) can be eliminated. Geometrically, the two vertical components provide the missing toric boundary in place of the horizontal component \(D_p\). If the fiber monomial over \(\infty\) is not a single variable, then its degree is represented only by a product of several boundary variables, so no prime boundary component can be replaced in this way.

\bibliographystyle{habbvr}
\bibliography{bib}

\end{document}